\newcommand{\abs}[1]{\lvert #1 \rvert}
\newcommand{\labs}[1]{\left| #1 \right|}
\newcommand{\scalarh}[2]{\langle #1 \,|\, #2 \rangle}
\newcommand{\scalarl}[2]{( #1 \,|\, #2 )}
\newcommand{\scalar}[2]{( #1 , #2 )}
\newcommand{\normh}[1]{\lVert #1 \rVert}
\newcommand{\norml}[2][2]{\labs{#2}_{#1}}
\newcommand{\settc}[2]{\bigl\{\,#1 \bigm\vert #2\,\bigr\}}
\newcommand{\tow}{\rightharpoonup}
\newcommand{\malf}{\boldsymbol{\alpha}}
\newcommand{\msig}{\boldsymbol{\Sigma}}
\newcommand{\alp}{\malf \cdot p}
\newcommand{\matr}[1]{\mathsf{#1}}
\DeclareMathOperator{\esp}{e}
\newcommand{\J}{J_{w}}
\newcommand{\dJ}[1]{dJ_{w}(\eta)[#1]}
\newcommand{\aeta}{a(\eta)}
\newcommand{\daeta}[1]{da(\eta)[#1]}
\newcommand{\acc}{\kappa}
\numberwithin{equation}{section}
\newtheorem{thm}[equation]{Theorem}
\newtheorem{lem}[equation]{Lemma}
\newtheorem{prop}[equation]{Proposition}
\theoremstyle{remark}
\newtheorem{rem}[equation]{Remark}
\title{Normalized solutions for the Klein Gordon-Dirac system}
\author[Coti Zelati]{Vittorio Coti Zelati}
\email[Coti Zelati]{vittorio.cotizelati@unina.it}
\address[Coti Zelati]{Dipartimento di Matematica Pura e Applicata 
``R.~Caccioppoli''\\
Universit\`a di Napoli ``Federico II''\\
via Cintia, M.S.~Angelo\\
80126 Napoli (NA), Italy}
\author{Margherita Nolasco} 
\email[Nolasco]{nolasco@univaq.it}
\address[Nolasco]{Dipartimento di Ingegneria e Scienze
dell'informazione e Matematica\\
Universit\`a dell'Aquila\\
via Vetoio, Loc.~Coppito\\
67010 L'Aquila (AQ) Italia}
\dedicatory{Dedicated to Antonio Ambrosetti, maestro and friend}
\begin{document}

\begin{abstract}
	We prove the existence of a stationary solution for the system
	describing the interaction between an electron coupled with a
	massless scalar field (a photon). We find a solution, with fixed
	$L^{2}$-norm, by variational methods, as a critical point of an
	energy functional.
\end{abstract}

\maketitle

\section{Introduction}
We study the interaction electron-photon analyizing the Euler-Lagrange
equations for a system consisting of a spinor field coupled with a
massless scalar field. More precisely our system consists of the Dirac
equation coupled with a massless Klein-Gordon equation, and look for
normalized and stationary solutions of the following system
\begin{equation}
	\label{eq:equazioneCompleta}
	\begin{cases}
		(-i\gamma^{\mu} \partial_{\mu} + m - s\varphi) \psi = 0 &
		\text{in } \mathbb{R} \times \mathbb{R}^{3} \\
		\partial^{\mu} \partial_{\mu} \varphi = 4\pi s
		\scalar{\psi}{\beta \psi}& \text{in } \mathbb{R} \times
		\mathbb{R}^{3} 
	\end{cases},
\end{equation}
where $\psi \colon \mathbb{R} \times \mathbb{R}^{3} \to
\mathbb{C}^{4}$, $\varphi \colon \mathbb{R} \times \mathbb{R}^{3} \to
\mathbb{R}$, $m > 0$ is the mass of the electron, $s > 0$ is the
coupling constant, $\gamma^{\mu}$ are the $4 \times 4$ Dirac matrices
\begin{equation*}
	\gamma^{0} = \beta = 
	\begin{pmatrix}	
		I & 0 \\
		0 & -I
	\end{pmatrix}
	\qquad
	\gamma^{k} = 
	\begin{pmatrix}	
		0 & \sigma^{k} \\
		-\sigma^{k} & 0
	\end{pmatrix}, \quad k = 1, \ldots, 3
\end{equation*}
and $\sigma^{k}$ are the $2 \times 2$ Pauli matrices
\begin{equation*}
	\sigma^{1} = 	
	\begin{pmatrix}	
		0 & 1 \\
		-1 & 0
	\end{pmatrix},
	\qquad
	\sigma^{2} = 	
	\begin{pmatrix}	
		0 & -i \\
		i & 0
	\end{pmatrix},
	\quad
	\sigma^{3} = 	
	\begin{pmatrix}	
		1 & 0 \\
		0 & -1
	\end{pmatrix},
\end{equation*}
and $\scalar{z}{w} = \sum_{i=1}^{4} \bar{z}_{i} w_{i}$ the scalar
product between $z$, $w \in \mathbb{C}^{4}$.

This problem is closely related to the one studied in
\cite{Nolasco_2021}, and we will prove existence of a solution of
\eqref{eq:equazioneCompleta} with essentially the same methods
developed in that article (see also \cite{CotiZelati_Nolasco_2019}).

More precisely we prove existence of stationary, normalized solutions
of this system, that is solutions $(\omega, \psi)$ of the following
problem:
\begin{equation}
	\label{eq:sistema}
	\begin{cases}
		(-i \malf \cdot \nabla + m\beta - s \varphi \beta) \psi = \omega
		\psi & \text{in } \mathbb{R}^{3} \\
		-\Delta \varphi = 4\pi \scalar{\psi}{\beta \psi}&
		\text{in } \mathbb{R}^{3} \\
		\norml{\psi}^{2} = \int_{\mathbb{R}^{3}} \abs{\psi(t, x)}^{2}
		\, dx = 1 
	\end{cases},
\end{equation}
where $\alpha_{i} = \beta \gamma_{i}$, $i = 1, \ldots, 3$. From
$-\Delta \varphi = 4\pi \scalar{\psi}{\beta \psi}$ we deduce that
\begin{equation*}
	\varphi = \scalar{\psi}{\beta \psi} * \frac{1}{\abs{x}}
\end{equation*}
and hence our problem reduces to
\begin{equation}
	\label{eq:sistemar}
	\begin{cases}
		(-i \malf \cdot \nabla + m\beta - s \scalar{\psi}{\beta \psi}
		* \frac{1}{\abs{x}} \beta) \psi = \omega \psi & \text{in }
		\mathbb{R}^{3} \\
		\norml{\psi}^{2} = \int_{\mathbb{R}^{3}} \abs{\psi(t, x)}^{2}
		\, dx = 1 
	\end{cases}.
\end{equation}
Our result is
\begin{thm}
	\label{thm:main}
	For all $s \in (0,\frac{1}{8\pi})$, there exists $\omega \in
	(0,m)$ and $\psi \in H^{1/2}(\mathbb{R}^{3}, \mathbb{C}^{4})$
	solutions of problem \eqref{eq:sistemar}.
\end{thm}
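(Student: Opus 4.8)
The plan is to obtain solutions of \eqref{eq:sistemar} as critical points, on the $L^{2}$-sphere $\mathcal{S}=\settc{\psi\in H^{1/2}(\mathbb{R}^{3},\mathbb{C}^{4})}{\norml{\psi}^{2}=1}$, of the energy functional
\begin{equation*}
	J(\psi)=\tfrac{1}{2}\scalarl{H_{0}\psi}{\psi}-\frac{s}{4}\int_{\mathbb{R}^{3}}\!\!\int_{\mathbb{R}^{3}}\frac{\scalar{\psi(x)}{\beta\psi(x)}\,\scalar{\psi(y)}{\beta\psi(y)}}{\abs{x-y}}\,dx\,dy,
\end{equation*}
where $H_{0}=-i\malf\cdot\nabla+m\beta$ is the free Dirac operator, whose spectrum is $(-\infty,-m]\cup[m,+\infty)$; the frequency $\omega$ will be the associated Lagrange multiplier. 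Writing $\rho=\scalar{\psi}{\beta\psi}$ and denoting by $D(\rho,\rho)$ the Coulomb integral above, I would first verify $J\in C^{1}$: by Hardy--Littlewood--Sobolev one has $D(\rho,\rho)\le C\norml[6/5]{\rho}^{2}$, and since $H^{1/2}\hookrightarrow L^{3}$ gives $\rho\in L^{1}\cap L^{3/2}\subset L^{6/5}$, a Gagliardo--Nirenberg interpolation yields the structural estimate
\begin{equation*}
	D(\rho,\rho)\;\le\;\acc\,\scalarl{\abs{H_{0}}\psi}{\psi}\,\norml{\psi}^{2},
\end{equation*}
in which the Coulomb term scales \emph{exactly} like the quadratic form $\scalarl{\abs{H_{0}}\psi}{\psi}$. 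The hypothesis $s<\tfrac{1}{8\pi}$ is precisely what makes the coefficient $\tfrac{1}{2}-\tfrac{s}{4}\acc$ strictly positive, so that on $\mathcal{S}$ the functional is bounded below and coercive in the $\dot H^{1/2}$-seminorm; it is the sharp value of $\acc$ that fixes the threshold $\tfrac{1}{8\pi}$.

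Because the quadratic form of $H_{0}$ is negative on the negative spectral subspace, $J$ is strongly indefinite, and I would remove this indefiniteness by an Esteban--S\'er\'e-type reduction. Splitting $H^{1/2}=\mathcal{H}^{+}\oplus\mathcal{H}^{-}$ along the spectral projectors of $H_{0}$ (the spectrum of $H_{0}$ on $\mathcal{H}^{\pm}$ being $\pm[m,+\infty)$) and writing $\psi=\psi^{+}+\psi^{-}$, I would solve, for each fixed positive component, the maximisation of $\psi^{-}\mapsto J(\psi^{+}+\psi^{-})$. The quadratic part there is $\le-m\norml{\psi^{-}}^{2}$ and strictly concave, while for $s<\tfrac{1}{8\pi}$ the quartic term is a small controlled perturbation; hence the second variation in the $\psi^{-}$ direction stays negative definite, the maximiser $\psi^{-}=h(\psi^{+})$ exists, is unique, and is $C^{1}$ by the implicit function theorem. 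This produces a reduced functional $\widetilde{J}(\psi^{+})=J(\psi^{+}+h(\psi^{+}))$, the $L^{2}$ constraint being incorporated through the multiplier $\omega$, whose constrained critical points correspond to those of $J$ on $\mathcal{S}$.

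It then remains to minimise $\widetilde{J}$; by the estimate above and $s<\tfrac{1}{8\pi}$ any minimising sequence is bounded in $H^{1/2}$. The main obstacle is the loss of compactness coming from translation invariance, which I would treat by Lions' concentration--compactness. \emph{Vanishing} is ruled out by the binding inequality $\inf\widetilde{J}<\tfrac{m}{2}$, which I would verify with a weakly localised trial spinor built from a zero-momentum positive-energy state: at scale $R$ the kinetic excess is $O(R^{-2})$ while the Coulomb gain is of order $s R^{-1}$, so for $R$ large the energy drops strictly below $\tfrac{m}{2}$ for every $s>0$. \emph{Dichotomy} is ruled out by strict subadditivity of $a\mapsto I(a):=\inf\settc{\widetilde{J}(\psi)}{\norml{\psi}^{2}=a}$; here it is essential that the mass term in $\abs{H_{0}}=\sqrt{-\Delta+m^{2}}$ breaks the scale invariance of the Coulomb nonlinearity, exactly as in the relativistic Hartree problem, which is what forbids energetically neutral splittings. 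Controlling the reduction $h(\cdot)$ uniformly along the sequence, so that concentration of $\psi^{+}$ transfers to the full spinor, is the delicate point; a translation to recapture the mass then yields a minimiser $\psi$.

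Finally, $\psi$ is a critical point of $J$ on $\mathcal{S}$, hence there is $\omega$ with $(H_{0}-s\varphi\beta)\psi=\omega\psi$ and $\varphi=\rho*\abs{x}^{-1}$, which is \eqref{eq:sistemar}. Pairing the equation with $\psi$ gives $\omega=\scalarl{H_{0}\psi}{\psi}-s\,D(\rho,\rho)$, and a quantitative use of the structural estimate, with the positive part $\scalarl{\abs{H_{0}}\psi^{+}}{\psi^{+}}$ dominating and $h(\psi^{+})$ small, forces $\omega>0$ for $s<\tfrac{1}{8\pi}$. On the other hand, writing $c:=\inf\widetilde{J}=J(\psi)$ one computes $\omega=2c-\tfrac{s}{2}D(\rho,\rho)\le 2c$, so the binding inequality $c<\tfrac{m}{2}$ gives $\omega<m$. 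Thus $\omega\in(0,m)$, and a standard bootstrap confirms $\psi\in H^{1/2}(\mathbb{R}^{3},\mathbb{C}^{4})$, completing the proof.
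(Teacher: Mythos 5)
Your overall architecture --- spectral splitting $X=X_{+}\oplus X_{-}$, inner maximization over the negative component, minimization of the reduced functional, and concentration--compactness with a binding inequality below $\frac{m}{2}$ --- is indeed the paper's. But there is a genuine gap at the heart of the reduction: the treatment of the $L^{2}$ constraint. You maximize $\psi^{-}\mapsto J(\psi^{+}+\psi^{-})$ \emph{unconstrained} and say the constraint is ``incorporated through the multiplier $\omega$''. As stated this produces the wrong equation: if $h(\psi^{+})$ is a free maximizer, the inner equation is $\Lambda_{-}\,dJ(\psi)=0$ with no multiplier, and a constrained critical point of $\widetilde{J}$ on $\settc{\psi^{+}}{\norml{\psi^{+}}=1}$ satisfies $dI(\psi)[v]=\omega\scalarl{\psi^{+}}{v}$ for $v\in X_{+}$ only, so the multiplier pairs against $\psi^{+}$ rather than against $\psi$; moreover $\norml{\psi^{+}+h(\psi^{+})}\neq 1$ in general, so you obtain neither \eqref{eq:equazionemoltiplicatore} nor a normalized solution of \eqref{eq:sistemar}. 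The paper's essential device is to put the constraint \emph{inside} the inner problem: for $w\in\Sigma_{+}$ one maximizes $\eta\mapsto I(\aeta w+\eta)$ over $B_{1}\subset X_{-}$ with $\aeta=\sqrt{1-\norml{\eta}^{2}}$, so every competitor $\psi=\aeta w+\eta$ lies on $\Sigma$; the boundary-repulsion estimate \eqref{eq:spinge} then gives Palais--Smale (Lemma \ref{lem:PSvale}), Lemma \ref{lem:criticomax} plus a mountain-pass argument gives uniqueness and smoothness of $w\mapsto\eta(w)$ (Proposition \ref{prop:massimo}), and only with this parametrization does Proposition \ref{prop:punticriticiE} deliver the multiplier against the full $\psi_{0}$. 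Relatedly, two of your quantitative claims are off: $I$ restricted to $\Sigma$ is \emph{not} bounded below (along $X_{-}$ it behaves like $-\frac{1}{2}\normh{\eta}^{2}+O(1)$), and the threshold $s<\frac{1}{8\pi}$ does not come from positivity of $\frac{1}{2}-\frac{s\acc}{4}$ (with $\acc=\frac{\pi}{2}$ that holds for all $s<\frac{4}{\pi}$); in the paper it enters through the Appendix estimate (Lemma \ref{lem:stimaMalefica}), which in Proposition \ref{prop:massimo} requires $1-16s\acc>0$, i.e.\ exactly $s<\frac{1}{8\pi}$.

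The compactness step is also asserted rather than proved where it matters. You invoke strict subadditivity of $a\mapsto\inf\settc{\widetilde{J}(\psi)}{\norml{\psi}^{2}=a}$ ``as in the relativistic Hartree problem'', but the breaking of scale invariance by the mass term is a heuristic, not an argument, and in this strongly indefinite setting subadditivity must be passed through the reduction. The paper replaces it by an exact scaling identity, $I_{s}(\sqrt{\mu}\,\tilde{\psi})=\mu\,I_{s\mu}(\tilde{\psi})$, which turns a dichotomy bubble of mass $\mu_{i}$ into a unit-mass critical point at coupling $s\mu_{i}$, together with the strict monotonicity $e(\theta s)<e(s)$ for $\theta>1$ (Proposition \ref{prop:stimee}, whose proof again uses $e(s)<\frac{m}{2}$); then $e(s)\geq\sum_{i}\mu_{i}e(s\mu_{i})>\sum_{i}\mu_{i}e(s)=e(s)$ gives the contradiction when $p>1$. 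Your binding computation (kinetic excess $O(\epsilon^{2})$ against a Coulomb gain of order $s\epsilon$) is essentially the paper's Lemma \ref{lem:stimAe}, and your bounds $\omega=2c-\frac{s}{2}D(\rho,\rho)\leq 2c<m$ and $\omega>0$ match the estimates in Proposition \ref{prop:punticriticiE}; note, though, that the paper rules out vanishing through $\omega_{n}\to\omega\leq 2e(s)<m$ in the identity $0=dI_{s}(\psi_{n})[\psi_{n}]-\omega_{n}\norml{\psi_{n}}^{2}$, not through the energy level directly. Finally, the point you yourself flag as delicate --- transferring concentration of $\psi^{+}$ through the reduction map --- is resolved in the paper by applying concentration--compactness to the full constrained Palais--Smale sequence $\psi_{n}=a(\eta_{n})w_{n}+\eta_{n}$ produced by Ekeland's principle for $\mathcal{E}_{s}$ on $\Sigma_{+}$, and by using the \emph{converse} statement of Proposition \ref{prop:punticriticiE} to identify each bubble with a critical point of $\mathcal{E}_{s\mu_{i}}$; your sketch would need to supply an equivalent correspondence to close the argument.
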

In the article \cite{Esteban_Georgiev_Sere_1996} the authors prove
using critical point theory the existence of one stationary solution
of equation \eqref{eq:equazioneCompleta} but do not prescribe its
$L^{2}$-norm.

We will find such a solution as a critical point of the functional 
\begin{equation*}
	I(\psi) = \frac{1}{2} \int_{\mathbb{R}^{3}} \scalar{H\psi}{\psi} -
	\frac{s}{4} \int_{\mathbb{R}^{3} \times \mathbb{R}^{3}}
	\frac{\scalar{\psi}{\beta \psi}(x) \scalar{\psi}{\beta
	\psi}(y)}{\abs{x - y}}
\end{equation*}
restricted on the manifold $\norml{\psi}^{2} = 1$.
Here 
\begin{equation*}
	H = -i \malf \cdot \nabla + m\beta.
\end{equation*}
The functional $I$ is strongly indefinite, and, following the method
introduced in \cite{CotiZelati_Nolasco_2019, Nolasco_2021}, the solution will
be found via a $\min$-$\max$ procedure consisting in minimizing the
supremum of $I$ over subspaces of dimension $1$ in the positive energy
subspace of the linear operator $H$, see proposition
\ref{prop:punticriticiE}. Let us remark here that we know very few
results on the existence of \emph{normalized} solutions for Dirac
equation (and more generally for strongly indefinite problems -- one
of these is \cite{Buffoni_Esteban_Sere_2006}).

\subsection{Notation and background results}

We let $\abs{u}^{p}_{p} = \int_{\mathbb{R}^{3}} \abs{u(x)}^{p}$, 
$\scalarl{u}{v} = \int_{\mathbb{R}^{3}} u(x) v(x)$.

Let us recall some well known facts on the Dirac operator $H$ (see for
more details \cite{Thaller_1992}): $H$ is a first order, self-adjoint
operator on $H^{1}(\mathbb{R}^{3}, \mathbb{C}^{4})$ with purely
absolutely continuous spectrum given by
\begin{equation*}
	\sigma(H) = (-\infty,-m] \cup [m,+\infty).
\end{equation*}
The orthogonal projectors $\Lambda_{\pm}$ on the positive and negative
energies subspaces are such that
\begin{equation*}
	H\Lambda_{\pm} = \Lambda_{\pm} H = \pm \sqrt{-\Delta + m^{2}}
	\Lambda_{\pm} = \pm \Lambda_{\pm} \sqrt{-\Delta + m^{2}}
\end{equation*}
and hence
\begin{equation*}
	\int \scalar{\psi(x)}{H \psi (x)} \, dx = \norml{(-\Delta + 
	m^{2})^{1/4} \Lambda_{+}\psi}^{2} - \norml{(-\Delta + 
	m^{2})^{1/4} \Lambda_{-}\psi}^{2}.
\end{equation*}
We will denote $X = H^{1/2}(\mathbb{R}^{3}, \mathbb{C}^{4})$, $X_{\pm}
= \Lambda_{\pm} X$, $\Sigma = \settc{\psi \in X}{\norml{\psi} = 1}$
and $\Sigma_{\pm} = \settc{\psi \in X_{\pm}}{\norml{\psi} = 1}$.

We have also that
\begin{align*}
	&\hat{H} = \mathcal{F} H \mathcal{F}^{-1} = \alp + m\beta \\
	&U \hat{H} U^{-1} = \lambda(p) \beta \\
	&\hat{\Lambda}_{\pm} = \mathcal{F} \Lambda_{\pm} \mathcal{F}^{-1}
	= U^{-1}\left(\frac{\matr{I} \pm \beta}{2}\right) U =
	\frac{1}{2}\left(\matr{I} \pm \frac{m}{\lambda(p)} \beta \pm
	\frac{1}{\lambda(p)} \alp\right)
\end{align*}
where
\begin{align*}
	&\mathcal{F} \psi(p) = \hat{\psi}(p) \left(= \frac{1}{(2\pi)^{3/2}}
	\int_{\mathbb{R}^{3}} \esp^{-ipx} \psi(x) \, dx \quad \text{for all $v 
	\in \mathcal{S}(\mathbb{R}^{3})$}\right),\\
	&\lambda(p) = \sqrt{\abs{p}^{2} + m^{2}},\\
	&U = u_{+}(p)\matr{I} + u_{-}(p) \beta \frac{\alp}{\abs{p}}, \\
	&U^{-1} = u_{+}(p)\matr{I} - u_{-}(p) \beta \frac{\alp}{\abs{p}},
	\\
	&u_{\pm}(p) = \sqrt{\frac{1}{2} \left(1 \pm \frac{m}{\lambda(p)}
	\right)}.
\end{align*}
	
Let, for $\phi$ and $\psi \in H^{1/2}(\mathbb{R}^{3}, C^{4})$
\begin{equation*}
	\scalarh{\phi}{\psi} = \int \sqrt{\abs{p}^{2} + m^{2}} 
	\scalar{\hat{\phi}(p)}{\hat{\psi}(p)} \, dp
\end{equation*}
and
\begin{equation*}
	\normh{\psi}^{2} = \scalarh{\psi}{\psi}.
\end{equation*}
We have that
\begin{equation*}
	\scalarh{\Lambda_{+}\phi}{\Lambda_{-}\psi} =
	\scalarl{\Lambda_{+}\phi}{\Lambda_{-}\psi} = 0.
\end{equation*}

Let us recall that, since $\mathcal{F}\frac{1}{\abs{x}} =
\sqrt{\frac{2}{\pi}}\frac{1}{\abs{p}^{2}}$, for all $f \in
L^{1}(\mathbb{R}^{3}) \cap L^{3/2}(\mathbb{R}^{3})$
\begin{equation}
	\label{eq:convoluzione_positiva}
	\int \frac{f(x)\bar{f}(y)}{\abs{x - y}} = 4\pi \int
	\frac{\abs{\hat{f}(p)}}{\abs{p}^{2}} \geq 0 
\end{equation}
and that for all $\rho \in L^{1}(\mathbb{R}^{3})$, $\psi \in 
H^{1/2}(\mathbb{R}^{3}, \mathbb{C}^{4})$
\begin{equation}
	\label{eq:stima_convoluzione}
	\int \frac{\rho(x)\abs{\psi}^{2}(y)}{\abs{x - y}} \leq \acc
	\abs{\rho}_{1} \norml{(-\Delta)^{1/4}\psi}^{2} \leq \acc
	\abs{\rho}_{1} \normh{\psi}^{2}
\end{equation}
($\acc = \frac{\pi}{2}$) and also that
\begin{equation}
	\label{eq:convoluzione_tende_0}
	\int \frac{\abs{f_{n}}(x) \abs{g_{n}}(x) \abs{h_{n}}(y)
	\abs{v}(y)}{\abs{x-y}} \to 0
\end{equation}
when $f_{n}$, $g_{n}$. $h_{n}$ and $v \in H^{1/2}$, $f_{n}$, $g_{n}$
bounded, $h_{n} \tow 0$.

\section{Maximization}

Let $I \colon H^{1/2}(\mathbb{R}^{3}, \mathbb{C}^{4}) \to \mathbb{R}$
\begin{equation*}
	I(\psi) = \frac{1}{2} \normh{\Lambda_{+}\psi}^{2} - \frac{1}{2}
	\normh{\Lambda_{-}\psi}^{2} - \frac{s}{4} \int
	\frac{\scalar{\psi}{\beta\psi}(x)
	\scalar{\psi}{\beta\psi}(y)}{\abs{x-y}}.
\end{equation*}

Let us fix $w \in \Sigma_{+}$ and let
\begin{equation*}
	B_{1} = \settc{\eta \in X_{-}}{\norml{\eta} < 1}.
\end{equation*}
We will look, given $w$, for a maximizer of the functional $\J$ 
defined on $B_{1}$
\begin{equation*}
	\J(\eta) = I(\aeta w + \eta) = \frac{1}{2} \normh{\aeta w}^{2} -
	\frac{1}{2} \normh{\eta}^{2} - \frac{s}{4} \int
	\frac{\scalar{\psi}{\beta\psi}(x)
	\scalar{\psi}{\beta\psi}(y)}{\abs{x-y}}
\end{equation*}
where $\aeta = \sqrt{1-\norml{\eta}^{2}}$ and $\psi = \aeta w + 
\eta \in \Sigma$. 

We have that $\daeta{\xi} = -\aeta^{-1} \scalarl{\eta}{\xi}$ and hence
the derivative of $\J$ is given, for all $\xi \in X_{-}$, by
\begin{multline}
	\label{eq:derivataJ}
	\dJ{\xi} = dI(\aeta w + \xi)[\daeta{\xi}w + \xi] = 
	dI(\psi)[h] \\
	= \scalarh{\aeta w}{\daeta{\xi} w} - \scalarh{\eta}{\xi} - s \int
	\frac{\scalar{\psi}{\beta\psi}(x) \scalar{\psi}{\beta h}(y)}
	{\abs{x-y}} \\
	= -\scalarl{\eta}{\xi} \normh{w}^{2} - \scalarh{\eta}{\xi} - s \int
	\frac{\scalar{\psi}{\beta\psi}(x) \scalar{\psi}{\beta h}(y)}
	{\abs{x-y}}
\end{multline}
(here $h = \daeta{\xi}w + \xi$) and we have, in particular
\begin{equation*}
	\dJ{\eta} = -\norml{\eta}^{2} \normh{w}^{2} - \normh{\eta}^{2} - s
	\int \frac{\scalar{\psi}{\beta\psi}(x) \scalar{\psi}{\beta
	(\daeta{\eta} w + \eta)}(y)} {\abs{x-y}}.
\end{equation*}

\begin{lem}
	\label{lem:stime_grad}
	For all $w \in \Sigma_{+}$ and $\eta \in B_{1}$ we have
	\begin{equation}
		\label{eq:bdd}
		\normh{\eta}^{2} \leq \aeta^{2} \normh{w}^{2} - 2\J(\eta),
	\end{equation}
	and for all $\eta \in B_{1}$ such that $\norml{\eta}^{2} \geq
	\frac{1}{2}$ and $\J(\eta) \geq 0$ we have that
	\begin{equation}
		\label{eq:spinge}
		\dJ{\eta} \leq - \frac{1}{2}(1 - 4s\acc )m < 0
	\end{equation}
\end{lem}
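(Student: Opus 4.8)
The plan is to treat the two estimates separately, noting that both hinge on the positivity of the Coulomb kernel recorded in \eqref{eq:convoluzione_positiva} and on the bound \eqref{eq:stima_convoluzione}. First, for \eqref{eq:bdd}: since $\beta$ is Hermitian the density $\scalar{\psi}{\beta\psi}$ is real-valued, so by \eqref{eq:convoluzione_positiva} the Coulomb integral appearing in $\J(\eta)$ is nonnegative. Discarding it and using $\normh{\aeta w}^{2}=\aeta^{2}\normh{w}^{2}$ gives $\J(\eta)\le\frac12\aeta^{2}\normh{w}^{2}-\frac12\normh{\eta}^{2}$, which rearranges at once into \eqref{eq:bdd}.

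For \eqref{eq:spinge} I would begin by recording the algebraic identity $h=\daeta{\eta}\,w+\eta=\psi-\aeta^{-1}w$, which follows from $\daeta{\eta}=-\aeta^{-1}\norml{\eta}^{2}$ together with $\aeta-\aeta^{-1}=-\aeta^{-1}\norml{\eta}^{2}$. Substituting $\beta h=\beta\psi-\aeta^{-1}\beta w$ into the Coulomb term of $\dJ{\eta}$ splits it into a diagonal piece $\int\frac{\scalar{\psi}{\beta\psi}(x)\scalar{\psi}{\beta\psi}(y)}{\abs{x-y}}\ge0$ (again by \eqref{eq:convoluzione_positiva}) minus $\aeta^{-1}$ times a cross term. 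Because of the overall minus sign in $\dJ{\eta}$, the diagonal piece carries a favorable sign and may simply be dropped, leaving
\[
-s\int\frac{\scalar{\psi}{\beta\psi}(x)\scalar{\psi}{\beta h}(y)}{\abs{x-y}}\le s\,\aeta^{-1}\,\labs{\int\frac{\scalar{\psi}{\beta\psi}(x)\scalar{\psi}{\beta w}(y)}{\abs{x-y}}}.
\]

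Next I would estimate the cross term. From the pointwise bounds $\abs{\scalar{\psi}{\beta\psi}}\le\abs{\psi}^{2}$ and $\abs{\scalar{\psi}{\beta w}}\le\abs{\psi}\abs{w}$ (since $\beta$ is an isometry of $\mathbb{C}^{4}$), Cauchy--Schwarz for the positive Coulomb kernel, followed by two applications of \eqref{eq:stima_convoluzione} with density $\rho=\abs{\psi}^{2}$ and $\abs{\rho}_{1}=\norml{\psi}^{2}=1$, yields
\[
\labs{\int\frac{\scalar{\psi}{\beta\psi}(x)\scalar{\psi}{\beta w}(y)}{\abs{x-y}}}\le\int\frac{\abs{\psi}^{2}(x)\,\abs{\psi}(y)\abs{w}(y)}{\abs{x-y}}\le\acc\,\normh{\psi}\,\normh{w}.
\]
This is exactly where the two hypotheses enter: using $\J(\eta)\ge0$ in \eqref{eq:bdd} gives $\normh{\eta}^{2}\le\aeta^{2}\normh{w}^{2}$, whence $\normh{\psi}^{2}=\aeta^{2}\normh{w}^{2}+\normh{\eta}^{2}\le2\aeta^{2}\normh{w}^{2}$, i.e. $\normh{\psi}\le\sqrt2\,\aeta\normh{w}$. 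Substituting this cancels the $\aeta^{-1}$ and bounds the cross contribution by $\sqrt2\,s\acc\normh{w}^{2}$.

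Finally I would collect everything. Since $\normh{w}^{2}=\int\lambda(p)\abs{\hat{w}(p)}^{2}\,dp\ge m\norml{w}^{2}=m$ and $\norml{\eta}^{2}\ge\frac12$, the first two terms of $\dJ{\eta}$ obey $-\norml{\eta}^{2}\normh{w}^{2}-\normh{\eta}^{2}\le-\frac12\normh{w}^{2}$, so that
\[
\dJ{\eta}\le-\Bigl(\tfrac12-\sqrt2\,s\acc\Bigr)\normh{w}^{2}\le-\tfrac12\bigl(1-2\sqrt2\,s\acc\bigr)m\le-\tfrac12(1-4s\acc)m,
\]
where the middle inequality uses $\normh{w}^{2}\ge m$ (legitimate because $\frac12-\sqrt2\,s\acc>0$ for $s<\frac1{8\pi}$) and the last uses $2\sqrt2<4$. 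Since $s<\frac1{8\pi}<\frac1{4\acc}$, the right-hand side is strictly negative, giving \eqref{eq:spinge}. The main obstacle is precisely the $\aeta^{-1}$ singularity as $\norml{\eta}\to1$: it is controlled only by pairing it with the a priori bound $\normh{\psi}\le\sqrt2\,\aeta\normh{w}$, which in turn requires \emph{both} $\J(\eta)\ge0$ and \eqref{eq:bdd}; a crude estimate of the diagonal piece, or a bound not exploiting this cancellation, would fail.
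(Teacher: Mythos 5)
Your proof is correct and follows essentially the same route as the paper's: positivity \eqref{eq:convoluzione_positiva} to discard the quartic Coulomb term for \eqref{eq:bdd}, and for \eqref{eq:spinge} the identity $h=\psi-\aeta^{-1}w$, dropping the positive diagonal term, bounding the cross term via \eqref{eq:stima_convoluzione} together with Cauchy--Schwarz, and using \eqref{eq:bdd} (under $\J(\eta)\geq 0$) to absorb the singular factor $\aeta^{-1}$. The only difference is harmless bookkeeping: the paper splits $\scalar{\psi}{\beta w}$ further into $\aeta\scalar{w}{\beta w}+\scalar{\eta}{\beta w}$, obtaining the constant $2s\acc$, whereas your grouping keeps $\scalar{\psi}{\beta w}$ intact and yields the slightly sharper $\sqrt{2}\,s\acc$, which you then correctly relax to the stated $4s\acc$.
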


\begin{proof}	
	We have, thanks to \eqref{eq:convoluzione_positiva} that for 
	$\eta \in B_{1}$ and $\psi = \aeta w + \eta$
	\begin{equation*}
		\frac{1}{2} \normh{\eta}^{2} \leq \frac{1}{2}
		\normh{\eta}^{2} + \frac{s}{4} \int
		\frac{\scalar{\psi}{\beta\psi}(x)
		\scalar{\psi}{\beta\psi}(y)}{\abs{x-y}} 
		= \frac{1}{2} \normh{a(\eta)w}^{2} - \J(\eta)
	\end{equation*}
	and \eqref{eq:bdd} follows.
	
	From \eqref{eq:bdd} follows  that $\normh{\eta} \leq \aeta 
	\normh{w}$ if $\J(\eta) \geq 0$, hence we have, if 
	$\norml{\eta}^{2} > \frac{1}{2}$:
	\begin{align*}
		\dJ{\eta} &= -\norml{\eta}^{2} \normh{w}^{2} -
		\normh{\eta}^{2} - s \int \frac{\scalar{\psi}{\beta\psi}(x)
		\scalar{\psi}{\beta \psi}(y)} {\abs{x-y}} \\
		&\quad + s \int \frac{\scalar{\psi}{\beta\psi}(x)
		\scalar{w}{\beta w}(y)} {\abs{x-y}} \\
		&\quad + s \aeta^{-1} \int \frac{\scalar{\psi}{\beta\psi}(x)
		\scalar{\eta}{\beta w}(y)} {\abs{x-y}} \\
		&\leq-\norml{\eta}^{2} \normh{w}^{2} - \normh{\eta}^{2} + s
		\acc (\normh{w}^{2} + \aeta^{-1}\normh{\eta} \normh{w}) \\
		&\leq- \frac{1}{2} \normh{w}^{2} - \normh{\eta}^{2} + 2s \acc
		\normh{w}^{2} < - \frac{1}{2}\left(1 - 4s\acc
		\right)\normh{w}^{2} \\
		&\leq - \frac{1}{2}\left(1 - 4s\acc \right)m\norml{w}^{2} = -
		\frac{1}{2}\left(1 - 4s\acc \right)m
	\end{align*}
	where we have used \eqref{eq:convoluzione_positiva} and
	\eqref{eq:stima_convoluzione}.
\end{proof}

\begin{rem}
	\label{rem:psbdd}
	It follows from the Lemma \ref{lem:stime_grad} that if $\eta_{n}$
	is a Palais-Smale sequence for $\J$ such that $\J(\eta_{n}) \geq
	0$, then $\norml{\eta_{n}}^{2} < \frac{1}{2}$ for all $n \in
	\mathbb{N}$ large enough.
\end{rem}

\begin{lem}
	\label{lem:PSvale}
	Let $\eta_{n} \in B_{1}$ be a Palais-Smale sequence for $\J$, 
	that is 
	\begin{equation*}
		\J(\eta_{n}) \to c \geq 0, \qquad d\J(\eta_{n}) \to 0.
	\end{equation*}
	
	Then $\eta_{n}$ converges, up to a subsequence, to a critical 
	point $\eta$ of $\J$.
\end{lem}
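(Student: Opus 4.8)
The plan is to establish the Palais--Smale condition in three movements: boundedness with a quantitative distance from $\partial B_{1}$, extraction of a weak limit, and an upgrade to strong convergence driven by the negative-definite quadratic part of $\J$, with the Coulomb term handled by \eqref{eq:convoluzione_tende_0} and the smallness of $s$. Since $\J(\eta_{n}) \to c \geq 0$, for $n$ large we have $\J(\eta_{n}) \geq 0$, so Remark \ref{rem:psbdd} gives $\norml{\eta_{n}}^{2} < \tfrac{1}{2}$; hence $a(\eta_{n}) = \sqrt{1-\norml{\eta_{n}}^{2}} \in (\tfrac{1}{\sqrt{2}},1]$ stays bounded away from $0$, and \eqref{eq:bdd} yields $\normh{\eta_{n}}^{2} \leq a(\eta_{n})^{2}\normh{w}^{2} - 2\J(\eta_{n}) \leq \normh{w}^{2} - 2\J(\eta_{n})$, so $(\eta_{n})$ is bounded in $X_{-}$ and $\psi_{n} = a(\eta_{n})w + \eta_{n}$ is bounded in $X$. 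Passing to a subsequence, $\eta_{n} \tow \eta$ in $X_{-}$ (a closed, hence weakly closed, subspace) and $a(\eta_{n}) \to a_{\ast}$; weak lower semicontinuity gives $\norml{\eta} \leq \tfrac{1}{\sqrt{2}} < 1$, so that $\eta \in B_{1}$.

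The core step is to test the differential, which tends to $0$, against $\xi_{n} := \eta_{n}-\eta \tow 0$. On one hand $dJ_{w}(\eta_{n})[\xi_{n}] \to 0$, since $dJ_{w}(\eta_{n}) \to 0$ in the dual of $X_{-}$ and $(\xi_{n})$ is bounded. On the other hand I would check $\dJ{\xi_{n}} \to 0$: the linear terms $-\scalarl{\eta}{\xi_{n}}\normh{w}^{2}$ and $-\scalarh{\eta}{\xi_{n}}$ vanish by weak convergence, and the Coulomb term vanishes by \eqref{eq:convoluzione_tende_0}, since $\psi$ is fixed while the test direction $h = \daeta{\xi_{n}}w + \xi_{n} \tow 0$ (note $\daeta{\xi_{n}} = -a(\eta)^{-1}\scalarl{\eta}{\xi_{n}} \to 0$). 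Subtracting, and recalling from \eqref{eq:derivataJ} that the leading part of $d\J$ is the negative-definite form $(\xi,\zeta)\mapsto -\scalarl{\xi}{\zeta}\normh{w}^{2} - \scalarh{\xi}{\zeta}$, I obtain
\[
	o(1) = \bigl(dJ_{w}(\eta_{n}) - dJ_{w}(\eta)\bigr)[\xi_{n}] = -\normh{\xi_{n}}^{2} - \norml{\xi_{n}}^{2}\normh{w}^{2} - sR_{n},
\]
where $R_{n}$ is the difference of the two Coulomb contributions.

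The whole difficulty is concentrated in $R_{n}$, and this is exactly where the Coulomb energy fails to be weakly continuous on $\mathbb{R}^{3}$. Writing $\psi_{n} = \psi + (\psi_{n}-\psi)$ with $\psi_{n}-\psi = (a(\eta_{n})-a(\eta))w + \xi_{n}$ and using $\abs{\scalar{\psi_{n}}{\beta\psi_{n}}} \leq \abs{\psi_{n}}^{2}$, I would decompose $R_{n}$ into integrals that carry the weakly-null factor $\eta_{n}-\eta$ paired against the fixed factors $\psi$ or $w$ (the remaining factors being the bounded $\psi_{n}$), all of which vanish by \eqref{eq:convoluzione_tende_0}, plus two surviving quadratic remainders: the defect $\int \frac{\abs{\psi_{n}}^{2}(x)\,\abs{\eta_{n}-\eta}^{2}(y)}{\abs{x-y}}$ and the $a$-correction $da(\eta_{n})[\xi_{n}]\int\frac{\scalar{\psi_{n}}{\beta\psi_{n}}(x)\,\scalar{\psi_{n}}{\beta w}(y)}{\abs{x-y}}$. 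The point is that $\norml{\psi_{n}} = 1$ (since $w \perp \eta_{n}$ in $L^{2}$ and $a(\eta_{n})^{2} + \norml{\eta_{n}}^{2} = 1$), so by \eqref{eq:stima_convoluzione} the defect is $\leq \acc\,\normh{\eta_{n}-\eta}^{2}$, while $da(\eta_{n})[\xi_{n}] = -a(\eta_{n})^{-1}\scalarl{\eta_{n}}{\eta_{n}-\eta} = O(\norml{\eta_{n}-\eta}^{2}) + o(1)$ makes the correction of the same order. Hence $\abs{R_{n}} \leq \acc\,\normh{\eta_{n}-\eta}^{2} + C\norml{\eta_{n}-\eta}^{2} + o(1)$, with $C$ controlled by the bounded Coulomb norms.

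Feeding this into the displayed identity gives $(1 - s\acc)\normh{\eta_{n}-\eta}^{2} + (\normh{w}^{2} - sC)\norml{\eta_{n}-\eta}^{2} \leq o(1)$. Because $s \in (0,\tfrac{1}{8\pi})$ forces $s\acc < \tfrac{1}{16} < 1$ (recall $\acc = \tfrac{\pi}{2}$) and $\normh{w}^{2} \geq m > 0$ dominates $sC$ for $s$ small, both coefficients are positive and $\normh{\eta_{n}-\eta} \to 0$. Thus $\eta_{n} \to \eta$ strongly in $X_{-}$; in particular $\norml{\eta_{n}} \to \norml{\eta}$, so $a(\eta_{n}) \to a(\eta)$ (i.e.\ $a_{\ast} = a(\eta)$) and $\psi_{n} \to \psi = a(\eta)w + \eta$ in $X$. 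Since every term of \eqref{eq:derivataJ} is continuous under strong convergence, passing to the limit in $dJ_{w}(\eta_{n})[\xi] \to 0$ for each fixed $\xi \in X_{-}$ yields $\dJ{\xi} = 0$, i.e.\ $\eta$ is a critical point of $\J$. The main obstacle---and the only place where the bound on $s$ is truly used---is the defect term above: it records the failure of weak continuity of the Coulomb energy, and it is absorbed by the negative-definite quadratic form precisely because $\norml{\psi_{n}} = 1$ and $s\acc < 1$.
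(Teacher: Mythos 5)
Your proof is correct and follows essentially the same route as the paper's: boundedness and $\norml{\eta_{n}}^{2} < \frac{1}{2}$ via Remark \ref{rem:psbdd} and \eqref{eq:bdd}, testing the differential against $\eta_{n} - \eta$, killing the cross terms with \eqref{eq:convoluzione_tende_0}, and using $\scalarl{\eta_{n}}{\eta_{n}-\eta} = \norml{\eta_{n}-\eta}^{2} + o(1)$ together with the bounded Coulomb norm to control the $a$-correction, so that the negative-definite form $-\norml{\cdot}^{2}\normh{w}^{2} - \normh{\cdot}^{2}$ forces strong convergence. The only deviation is in one term: where you absorb the diagonal Coulomb defect $\int\frac{\scalar{\psi_{n}}{\beta(\eta_{n}-\eta)}(x)\,\scalar{\psi_{n}}{\beta(\eta_{n}-\eta)}(y)}{\abs{x-y}}$ into $\normh{\eta_{n}-\eta}^{2}$ via \eqref{eq:stima_convoluzione}, $\norml{\psi_{n}} = 1$ and $s\acc < 1$, the paper discards it for free, since it enters with a minus sign and is nonnegative by \eqref{eq:convoluzione_positiva} --- a marginally cleaner step, but your smallness condition is comfortably satisfied on the whole range $s\acc < \frac{1}{16}$.
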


\begin{proof}
	Follows form the Lemma \ref{lem:stime_grad} and Remark
	\ref{rem:psbdd} that $\norml{\eta_{n}}^{2} < \frac{1}{2}$ and that
	$\normh{\eta_{n}}$ is bounded, hence $\eta_{n} \tow \eta$ (up to a
	subsequence).
	
	From
	\begin{multline*}
		o(1) = d\J(\eta_{n})[\eta_{n} - \eta] = -
		\scalarl{\eta_{n}}{\eta_{n} - \eta} \normh{w}^{2} -
		\scalarh{\eta_{n}}{\eta_{n} - \eta} \\
		- s \int \frac{\scalar{\psi_{n}}{\beta \psi_{n}}(x)
		\scalar{\psi_{n}}{\beta(-a(\eta_{n})^{-1}
		\scalarl{\eta_{n}}{\eta_{n} - \eta}w + \eta_{n}
		-\eta)}(y)}{\abs{x - y}}
	\end{multline*}
	we deduce that
	\begin{multline*}
		\norml{\eta_{n} - \eta}^{2} \normh{w}^{2} + \normh{\eta_{n} -
		\eta}^{2} = - \scalarl{\eta}{\eta_{n} - \eta} \normh{w}^{2} -
		\scalarh{\eta}{\eta_{n} - \eta} \\
		+sa(\eta_{n})^{-1}\scalarl{\eta_{n}}{\eta_{n}-\eta}) \int
		\frac{\scalar{\psi_{n}}{\beta \psi_{n}}(x)
		\scalar{\psi_{n}}{\beta w}(y)}{\abs{x - y}} \\
		- s \int \frac{\scalar{\psi_{n}}{\beta \psi_{n}}(x)
		\scalar{\psi_{n}}{\beta(\eta_{n} - \eta)}(y)}{\abs{x - y}} + 
		o(1).
	\end{multline*}
	We have that 
	\begin{align*}
		\int \frac{\scalar{\psi_{n}}{\beta \psi_{n}}(x)
		\scalar{\psi_{n}}{\beta(\eta_{n} - \eta)}(y)}{\abs{x - y}} 
		&= \int \frac{\scalar{\psi_{n}}{\beta (\eta_{n} -\eta)}(x)
		\scalar{\psi_{n}}{\beta(\eta_{n} - \eta)}(y)}{\abs{x - y}} \\
		&\qquad + \int \frac{\scalar{\psi_{n}}{\beta \eta}(x)
		\scalar{\psi_{n}}{\beta(\eta_{n} - \eta)}(y)}{\abs{x - y}} \\
		&\qquad + a(\eta_{n}) \int \frac{\scalar{\psi_{n}}{\beta w}(x)
		\scalar{\psi_{n}}{\beta(\eta_{n} - \eta)}(y)}{\abs{x - y}} \\
		&= \int \frac{\scalar{\psi_{n}}{\beta (\eta_{n} - \eta)}(x)
		\scalar{\psi_{n}}{\beta(\eta_{n} - \eta)}(y)}{\abs{x - y}} +
		o(1)
	\end{align*}
	and
	\begin{align*}
		\labs{\int \frac{\scalar{\psi_{n}}{\beta \psi_{n}}(x)
		\scalar{\psi_{n}}{\beta a(\eta_{n}) w}(y)}{\abs{x - y}}} &
		\leq \acc \normh{\psi_{n}} \normh{a(\eta_{n}) w} \\
		&\leq \acc (2 \normh{a(\eta_{n}) w}^{2} +
		\normh{\eta_{n}}^{2} ) \leq 3 \acc a(\eta_{n})^{2}
		\normh{w}^{2}.
	\end{align*}
	Since $\norml{\eta_{n}}^{2} < \frac{1}{2}$ we have
	\begin{equation*}
		a(\eta_{n})^{-2} \scalarl{\eta_{n}}{\eta_{n} - \eta} =
		a(\eta_{n})^{-2} \norml{\eta_{n} - \eta}^{2} + o(1)
	\end{equation*}
	and we deduce that
	\begin{multline*}
		\norml{\eta_{n} - \eta}^{2} \normh{w}^{2} + \normh{\eta_{n} -
		\eta}^{2} \leq 3s\acc \norml{\eta_{n} - \eta}^{2}
		\normh{w}^{2} \\
		- s\int \frac{\scalar{\psi_{n}}{\beta (\eta_{n} - \eta)}(x)
		\scalar{\psi_{n}}{\beta(\eta_{n} - \eta)}(y)}{\abs{x - y}} +
		o(1) \\
		\leq 3s\acc \norml{\eta_{n} - \eta}^{2}
		\normh{w}^{2} + o(1)
	\end{multline*}
	and $\eta_{n} \to \eta$, with $\eta$ critical point of $\J$.
\end{proof}

We now show that all the critical points of $\J$ at positive levels 
are strict local maxima. This lemma follows as in 
\cite{CotiZelati_Nolasco_2019, Nolasco_2021}
\begin{lem}
	\label{lem:criticomax}
	Let $\eta \in X_{-}$ a critical point of $\J$ such that $\J(\eta) \geq 
	0$.
	
	Then there exists $\delta > 0$ such that 
	\begin{equation*}
		d^{2}\J(\eta)[\xi, \xi] \leq -\delta \normh{\xi}^{2} \qquad
		\text{ for all } \xi \in X_{-}.
	\end{equation*}

\end{lem}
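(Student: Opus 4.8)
The plan is to compute $d^{2}\J(\eta)[\xi,\xi]$ explicitly by the chain rule, writing $\J = I\circ\psi$ with $\psi = \psi(\eta) = \aeta w + \eta$, and then to exhibit a \emph{free} quadratic part that dominates every other contribution. Differentiating $\psi$ in the direction $\xi\in X_{-}$ gives $D\psi(\eta)[\xi] = \daeta{\xi}\,w + \xi =: h$ and $D^{2}\psi(\eta)[\xi,\xi] = d^{2}a(\eta)[\xi,\xi]\,w$, where, from $\aeta=\sqrt{1-\norml{\eta}^{2}}$ and $\daeta{\xi}=-\aeta^{-1}\scalarl{\eta}{\xi}$, one computes
\begin{equation*}
	d^{2}a(\eta)[\xi,\xi] = -\aeta^{-3}\scalarl{\eta}{\xi}^{2} - \aeta^{-1}\norml{\xi}^{2} \le 0.
\end{equation*}

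First I would differentiate the quadratic part $\tfrac12\normh{\aeta w}^{2}-\tfrac12\normh{\eta}^{2}$. Since $\normh{\aeta w}^{2}=(1-\norml{\eta}^{2})\normh{w}^{2}$, its second derivative in $\xi$ is $-\norml{\xi}^{2}\normh{w}^{2}$, while that of $-\tfrac12\normh{\eta}^{2}$ is $-\normh{\xi}^{2}$. This yields the free part
\begin{equation*}
	-\norml{\xi}^{2}\normh{w}^{2} - \normh{\xi}^{2} \le -\normh{\xi}^{2},
\end{equation*}
strictly negative, which will furnish the gap $\delta$.

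Next, for the Coulomb term $N(\psi)=\tfrac{s}{4}\int\frac{\scalar{\psi}{\beta\psi}(x)\scalar{\psi}{\beta\psi}(y)}{\abs{x-y}}$ the chain rule gives
\begin{equation*}
	d^{2}(N\circ\psi)(\eta)[\xi,\xi] = d^{2}N(\psi)[h,h] + dN(\psi)\bigl[d^{2}a(\eta)[\xi,\xi]\,w\bigr].
\end{equation*}
Writing $\rho_{\phi}=\scalar{\phi}{\beta\phi}$ and the symmetric positive form $D(f,g)=\int\frac{f(x)g(y)}{\abs{x-y}}$, one finds $d^{2}N(\psi)[h,h]=2s\,D(\mathrm{Re}\scalar{\psi}{\beta h},\mathrm{Re}\scalar{\psi}{\beta h})+s\,D(\rho_{\psi},\rho_{h})$. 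The first summand is $\ge 0$ by \eqref{eq:convoluzione_positiva}; entering $d^{2}\J$ with a minus sign it only helps and may be discarded. The indefinite remainder $-s\,D(\rho_{\psi},\rho_{h})$ is bounded in absolute value using $\abs{\rho_{\psi}}\le\abs{\psi}^{2}$, $\abs{\rho_{h}}\le\abs{h}^{2}$, the estimate \eqref{eq:stima_convoluzione} with $\abs{\abs{\psi}^{2}}_{1}=\norml{\psi}^{2}=1$, and $\normh{h}\le C\normh{\xi}$, the latter following from $\aeta\ge 1/\sqrt{2}$ (valid at a critical point with $\J(\eta)\ge0$ by \eqref{eq:spinge}) together with $\norml{\xi}\le m^{-1/2}\normh{\xi}$; this gives $\abs{s\,D(\rho_{\psi},\rho_{h})}\le C s\,\normh{\xi}^{2}$. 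For the last term, $\abs{d^{2}a(\eta)[\xi,\xi]}\le C\norml{\xi}^{2}\le Cm^{-1}\normh{\xi}^{2}$ while $\abs{dN(\psi)[w]}$ is a fixed constant (bounded through \eqref{eq:stima_convoluzione} after $\abs{\psi}\abs{w}\le\tfrac12(\abs{\psi}^{2}+\abs{w}^{2})$), so this contribution is again $\le Cs\,\normh{\xi}^{2}$.

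Collecting the estimates yields $d^{2}\J(\eta)[\xi,\xi]\le -(1-Cs)\,\normh{\xi}^{2}$, so that $\delta=1-Cs>0$ for $s\in(0,\tfrac{1}{8\pi})$. The hard part is the bookkeeping of the sign-indefinite Coulomb cross-terms: the decisive simplification is that \eqref{eq:convoluzione_positiva} lets me throw away the genuinely positive-definite piece of $d^{2}N$, leaving only $O(s)$ remainders that \eqref{eq:stima_convoluzione} controls by $\normh{\xi}^{2}$, while the constraint factor $\aeta$ must be kept bounded away from $0$ via the a priori bound $\norml{\eta}^{2}<\tfrac12$ holding at critical points of nonnegative level.
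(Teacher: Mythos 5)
Your chain-rule computation of $d^{2}\J$ is structurally correct (the free part $-\norml{\xi}^{2}\normh{w}^{2}-\normh{\xi}^{2}$, the discarding of the positive-definite piece $2s\,D(\mathrm{Re}\scalar{\psi}{\beta h},\mathrm{Re}\scalar{\psi}{\beta h})$ via \eqref{eq:convoluzione_positiva}, and the use of \eqref{eq:spinge} to get $\norml{\eta}^{2}<\tfrac12$ at a critical point of nonnegative level all match what is needed), but the final absorption step has a genuine gap: the constant $C$ in your bounds $\normh{h}\le C\normh{\xi}$ and $\abs{d^{2}a(\eta)[\xi,\xi]}\,\abs{dN(\psi)[w]}\le Cs\normh{\xi}^{2}$ is \emph{not} universal --- it contains $\normh{w}$. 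Indeed $\normh{h}\le \aeta^{-1}\abs{\scalarl{\eta}{\xi}}\normh{w}+\normh{\xi}\le \norml{\xi}\normh{w}+\normh{\xi}$, and converting $\norml{\xi}$ into $m^{-1/2}\normh{\xi}$ gives $C\sim 1+m^{-1/2}\normh{w}$, so your remainder is really of size $s\acc\,m^{-1}\normh{w}^{2}\normh{\xi}^{2}$; likewise $\abs{dN(\psi)[w]}$ is of size $s\normh{w}^{2}$, not an absolute constant. Since $\normh{w}$ is unbounded on $\Sigma_{+}$ and nothing in the hypotheses controls it, the conclusion ``$\delta=1-Cs>0$ for $s\in(0,\tfrac{1}{8\pi})$'' is unjustified: as soon as $s\normh{w}^{2}/m$ is of order one your inequality reads $d^{2}\J(\eta)[\xi,\xi]\le-(1-Cs)\normh{\xi}^{2}$ with $1-Cs\le 0$, which establishes nothing --- not even a $w$-dependent $\delta>0$. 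The problematic directions are the high-frequency ones, where $\normh{\xi}^{2}\gg m\norml{\xi}^{2}$ and the $L^{2}$-smallness of $\xi$ cannot compensate the factor $\normh{w}^{2}$.

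The gap is repairable inside your own scheme, and the repair shows why the two-term structure of the free part must be kept: absorb each Coulomb remainder against the \emph{matching} component of $-\norml{\xi}^{2}\normh{w}^{2}-\normh{\xi}^{2}$ instead of collapsing everything onto $\normh{\xi}^{2}$. Concretely, from $\norml{\eta}^{2}<\tfrac12$ one has $\abs{\daeta{\xi}}\le\norml{\xi}$, hence $\abs{h}^{2}\le 2\norml{\xi}^{2}\abs{w}^{2}+2\abs{\xi}^{2}$ pointwise and, by \eqref{eq:stima_convoluzione} with $\abs{\rho_{\psi}}_{1}\le 1$,
\begin{equation*}
	s\labs{D(\rho_{\psi},\rho_{h})}\le 2s\acc\bigl(\norml{\xi}^{2}\normh{w}^{2}+\normh{\xi}^{2}\bigr);
\end{equation*}
similarly $\abs{d^{2}a(\eta)[\xi,\xi]}\le 2\sqrt{2}\,\norml{\xi}^{2}$ and $\abs{dN(\psi)[w]}\le \tfrac{3s\acc}{2}\normh{w}^{2}$, where the latter requires $\normh{\psi}^{2}\le 2\normh{w}^{2}$ --- a point you omit, which follows from \eqref{eq:bdd} together with $\J(\eta)\ge 0$ (so criticality at nonnegative level is used twice, not only through Lemma \ref{lem:stime_grad}'s bound on $\norml{\eta}$). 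Collecting yields $d^{2}\J(\eta)[\xi,\xi]\le -(1-Cs\acc)\bigl(\norml{\xi}^{2}\normh{w}^{2}+\normh{\xi}^{2}\bigr)$ with a \emph{universal} $C$, negative in the stated range of $s$. Note that the paper proceeds differently: it uses the criticality identities $\dJ{\xi}=0$ and $\dJ{\eta}=0$ to cancel the mixed terms involving $\Gamma(\xi)$ and $\Gamma(\eta)$ (deriving along the way the sign information $\Gamma(\eta)<0$), arriving at the explicit bound $-\tfrac13(1-s\acc)\normh{\xi}^{2}$; your direct route, once corrected as above, is a legitimate alternative, but as written the quantitative step fails.
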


\begin{proof}
	In order to compute the second derivative se denote $\psi = \aeta
	w + \eta$ and $h = \daeta{\xi}w + \xi$ and observe that
	\begin{equation*}
		d^{2}\aeta[\xi,\zeta] = -\aeta^{-1} \left(
		\scalarl{\xi}{\zeta} + \frac{\scalarl{\eta}{\xi}
		\scalar{\eta}{\zeta}}{1 - \norml{\eta}^{2}} \right).
	\end{equation*}
	Then	
	\begin{align*}
		d^{2}\J(\eta)[\xi, &\xi] = d^{2}I(\psi)[h,
		h] + dI(\psi)[d^{2}\aeta[\xi, \xi]w] \\
		&= \scalarh{\daeta{\xi}w}{\daeta{\xi}w} - \scalarh{\xi}{\xi}
		-s \int \frac{\scalar{\psi}{\beta \psi}(x)\scalar{h}{\beta
		h}(y)}{\abs{x - y}} \\
		&\quad -2s\int \frac{\scalar{\psi}{\beta h}(x)\scalar{\psi}{\beta
		h}(y)}{\abs{x - y}} + \aeta d^{2}\aeta[\xi, \xi] 
		\scalarh{w}{w} \\
		&\quad -s\int \frac{\scalar{\psi}{\beta
		\psi}(x)\scalar{\psi}{\beta d^{2}\aeta[\xi.\xi] w}(y)}{\abs{x
		- y}} \\
		&= - \norml{\xi}^{2}\normh{w}^{2} + s\norml{\xi}^{2} \int
		\frac{\scalar{\psi}{\beta \psi}(x)\scalar{w}{\beta
		w}(y)}{\abs{x - y}} - \normh{\xi}^{2} \\
		&\quad -s\int \frac{\scalar{\psi}{\beta \psi}(x)\scalar{\xi}{\beta
		\xi}(y)}{\abs{x - y}} + 2s \frac{\scalarl{\eta}{\xi}}{1 - 
		\norml{\eta}^{2}} \aeta \Gamma(\xi) \\
		&\quad -2s\int \frac{\scalar{\psi}{\beta
		h}(x)\scalar{\psi}{\beta h}(y)}{\abs{x - y}} + s
		\left(\frac{\norml{\xi}^{2}}{1- \norml{\eta}^{2}} +
		\frac{\scalarl{\eta}{\xi}^{2}}{(1 - \norml{\eta}^{2})^{2}}
		\right) \aeta \Gamma(\eta)
	\end{align*}
	where we have set
	\begin{equation*}
		\Gamma(\zeta) = \int \frac{\scalar{\psi}{\beta
		\psi}(x)\scalar{w}{\beta \zeta}(y)}{\abs{x - y}}.
	\end{equation*}
	Since $\eta$ is a critical point for $\J$, we have that
	\begin{multline*}
		d^{2}\J(\eta)[\xi, \xi] = d^{2}\J(\eta)[\xi, \xi] + 2
		\frac{\scalarl{\eta}{\xi}}{1 - \norml{\eta}^{2}} \dJ{\xi} \\
		+ \left( \frac{\norml{\xi}^{2}}{1 - \norml{\eta}^{2}} + 3
		\frac{\scalar{\eta}{\xi}^{2}}{(1 - \norml{\eta}^{2})^{2}}
		\right) \dJ{\eta}.
	\end{multline*}
	We have that
	\begin{align*}
		0 &= \dJ{\eta} = -\norml{\eta}^{2} \normh{w}^{2} -
		\normh{\eta}^{2} + s \frac{\norml{\eta}^{2}}{1 -
		\norml{\eta}^{2}} \int \frac{\scalar{\psi}{\beta\psi}(x)
		\scalar{w}{\beta w}(y)} {\abs{x-y}} \\
		& -s \int \frac{\scalar{\psi}{\beta\psi}(x)
		\scalar{\eta}{\beta \eta}(y)} {\abs{x-y}} - s \left(\frac{1 -
		2\norml{\eta}^{2}}{1 - \norml{\eta}^{2}}\right) a 
		\Gamma(\eta)\\
		&\leq -\norml{\eta}^{2}(1 - \acc)\normh{w} - (1 -
		\acc)\normh{\eta} - s \left(\frac{1 -
		2\norml{\eta}^{2}}{1 - \norml{\eta}^{2}}\right) a \Gamma(\eta)
	\end{align*}
	which implies that $\Gamma(\eta) < 0$. After some semplification
	we get
	\begin{align*}
		d^{2}\J(\eta)[\xi, \xi] &\leq -Q (1 -
		s\acc)\normh{w}^{2} - \frac{\norml{\xi}^{2}}{1 -
		\norml{\eta}^{2}} (1 - s\acc) \normh{\eta}^{2} \\
		&\quad - (1 - s\acc)\normh{\xi + R \eta}^{2} \\
		&\leq - \frac{\norml{\xi}^{2}}{1 - \norml{\eta}^{2}} (1 -
		s\acc) \normh{\eta}^{2} - (1 -
		s\acc)\left(\frac{1}{3} \normh{\xi}^{2} - \frac{1}{2}
		R^{2}\normh{\eta}^{2} \right) \\
		&\leq -\frac{1}{3} (1 - s\acc) \normh{\xi}^{2}
	\end{align*}
	where 
	\begin{align*}
		&R = \frac{\scalar{\eta}{\xi}}{1 - \norml{\eta}^{2}} \\
		&Q = \norml{\xi}^{2} + 2R\scalar{\eta}{\xi} - \norml{\eta}^{2}
		\left(\frac{\norml{\xi}^{2}}{1 - \norml{\eta}^{2}} + R^{2}\right).
	\end{align*}
	Remark that, since $\norml{\eta}^{2} < \frac{1}{2}$, we have
	\begin{equation*}
		Q \geq \left(\frac{\norml{\xi}}{1 - \norml{\eta}^{2}} +
		R^{2}\right)(1 - 2\norml{\eta}^{2}) > 0
	\end{equation*}
	and
	\begin{equation*}
		\frac{\norml{\xi}^{2}}{1 - \norml{\eta}^{2}} - \frac{1}{2}
		\frac{\scalarl{\eta}{\xi}^{2}}{(1 - \norml{\eta}^{2})^{2}}
		\geq \frac{\norml{\xi}^{2}(2 - 3\norml{\eta}^{2})}{2(1 -
		\norml{\eta}^{2})^{2}} > \frac{\norml{\xi}^{2}}{4(1 -
		\norml{\eta}^{2})^{2}} > 0.
	\end{equation*}
\end{proof}

We let, for all $w \in \Sigma_{+}$
\begin{equation*}
	\mathcal{E}(w) = \sup_{\eta \in B_{1}} \J(\eta).
\end{equation*}

\begin{lem}
	\label{lem:stime_max}
	For all $w \in \Sigma_{+}$ we have
	\begin{equation*}
		0 < \frac{1}{4}(2 - s\acc)m \leq \frac{1}{4}(2 -
		s\acc) \normh{w}^{2} \leq \mathcal{E}(w) \leq
		\frac{1}{2} \normh{w}^{2}.
	\end{equation*}
\end{lem}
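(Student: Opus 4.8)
The plan is to prove the four inequalities in the displayed chain separately: the two outer bounds by direct estimates of $\J$, and the two inner inequalities by elementary observations about the norms together with the smallness of $s\acc$.

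For the upper bound $\mathcal{E}(w) \le \frac{1}{2}\normh{w}^{2}$ I would simply discard the self-interaction term, which is nonnegative by \eqref{eq:convoluzione_positiva}: for every $\eta \in B_{1}$,
\begin{equation*}
	\J(\eta) \le \frac{1}{2} \normh{\aeta w}^{2} = \frac{1}{2} \aeta^{2} \normh{w}^{2} \le \frac{1}{2} \normh{w}^{2},
\end{equation*}
since $\aeta^{2} = 1 - \norml{\eta}^{2} \le 1$; taking the supremum over $\eta \in B_{1}$ gives the claim.

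For the lower bound $\frac{1}{4}(2 - s\acc)\normh{w}^{2} \le \mathcal{E}(w)$ I would test with $\eta = 0$, for which $a(0) = 1$ and $\psi = w$, so that
\begin{equation*}
	\mathcal{E}(w) \ge \J(0) = \frac{1}{2}\normh{w}^{2} - \frac{s}{4}\int \frac{\scalar{w}{\beta w}(x)\scalar{w}{\beta w}(y)}{\abs{x-y}}.
\end{equation*}
The only real work is to bound the self-interaction term from above. Since $\frac{1}{\abs{x-y}} > 0$ and $\abs{\scalar{w}{\beta w}} \le \abs{w}^{2}$ pointwise (as $\beta$ is self-adjoint with $\beta^{2} = \matr{I}$, so its eigenvalues are $\pm 1$), I would estimate
\begin{equation*}
	\int \frac{\scalar{w}{\beta w}(x)\scalar{w}{\beta w}(y)}{\abs{x-y}} \le \int \frac{\abs{w}^{2}(x)\,\abs{w}^{2}(y)}{\abs{x-y}} \le \acc\,\norml[1]{\abs{w}^{2}}\,\normh{w}^{2} = \acc\,\normh{w}^{2},
\end{equation*}
where the middle step is \eqref{eq:stima_convoluzione} applied with $\rho = \abs{w}^{2}$ and $\psi = w$, and the last equality uses $\norml[1]{\abs{w}^{2}} = \norml{w}^{2} = 1$. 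This yields $\J(0) \ge \frac{1}{4}(2 - s\acc)\normh{w}^{2}$.

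Finally, the two inner inequalities are elementary. Since $\acc = \frac{\pi}{2}$ and $s < \frac{1}{8\pi}$, we have $s\acc < \frac{1}{16} < 2$, hence $2 - s\acc > 0$ and $\frac{1}{4}(2 - s\acc)m > 0$ because $m > 0$; this is the leftmost bound. The inequality $\frac{1}{4}(2 - s\acc)m \le \frac{1}{4}(2 - s\acc)\normh{w}^{2}$ then reduces, after dividing by the positive factor, to $m \le \normh{w}^{2}$, which follows from $\normh{w}^{2} = \int \lambda(p)\abs{\hat{w}(p)}^{2}\,dp \ge m\int\abs{\hat{w}(p)}^{2}\,dp = m\norml{w}^{2} = m$, using $\lambda(p) \ge m$ and Plancherel. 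The main (though mild) obstacle is the indefinite sign of $\scalar{w}{\beta w}$ in the lower-bound step: although the full self-interaction integral is nonnegative, obtaining an \emph{upper} bound on it requires first dominating $\scalar{w}{\beta w}$ pointwise by $\abs{w}^{2}$, then exploiting the positivity of the Coulomb kernel, and only afterwards invoking \eqref{eq:stima_convoluzione} with the correct $L^{1}$-normalization.
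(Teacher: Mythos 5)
Your proof is correct and follows essentially the same route as the paper's: the lower bound by testing with $\eta = 0$ and controlling the quartic term via \eqref{eq:stima_convoluzione} with $\abs{\rho}_{1} = \norml{w}^{2} = 1$, the upper bound by discarding the nonnegative self-interaction term together with $-\frac{1}{2}\normh{\eta}^{2} \leq 0$ and $\aeta^{2} \leq 1$, and the inner inequalities from $\normh{w}^{2} \geq m\norml{w}^{2} = m$. The only difference is that you spell out the intermediate steps the paper leaves implicit, namely the pointwise domination $\abs{\scalar{w}{\beta w}} \leq \abs{w}^{2}$ and the positivity of the Coulomb kernel needed before invoking \eqref{eq:stima_convoluzione}.
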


\begin{proof}
	We have that 
	\begin{align*}
		\mathcal{E}(w) &\geq \J(0) = \frac{1}{2} \normh{w}^{2} -
		\frac{s}{4} \int \frac{\scalar{w}{\beta w}(x)\scalar{w}{\beta
		w}(y)}{\abs{x-y}} \\
		&\geq \frac{1}{4}(2 - s\acc) \normh{w}^{2} \geq
		\frac{1}{4}(2 - s\acc) m \norml{w}^{2} = \frac{1}{4}(2 -
		s\acc) m
	\end{align*}
	and, for all $\eta \in B_{1}$, we have
	\begin{equation*}
		\J(\eta) = \frac{1}{2} \normh{\aeta w}^{2} - \frac{1}{2}
		\normh{\eta}^{2} - \frac{s}{4} \int
		\frac{\scalar{\psi}{\beta\psi}(x)
		\scalar{\psi}{\beta\psi}(y)}{\abs{x-y}} \leq \frac{1}{2}
		\normh{w}^{2}.
	\end{equation*}
\end{proof}

\begin{prop}
	\label{prop:massimo}
	For every $w \in \Sigma_{+}$ there is a unique $\eta(w) \in B_{1}$
	such that
	\begin{equation*}
		\J(\eta(w)) = \max_{\eta \in B_{1}} \J(\eta) = \mathcal{E}(w).
	\end{equation*}
	
	 $\eta(w)$ is a critical point of $\J$ on $B_{1}$ such that
	 $\norml{\eta(w)} < \frac{1}{2}$ and
	\begin{equation}
		\label{eq:stimeeta}
		\normh{\eta(w)}^{2} + m \leq \normh{\aeta w}^{2}, \qquad
		\normh{\eta(w)}^{2} \leq \frac{s}{2}\acc \normh{w}^{2}.
	\end{equation}
	Moreover the map 
	\begin{equation*}
		w \in X_{+} \setminus \{0\} \mapsto \gamma(w) =
		\eta(\norml{w}^{-1}w) \in B_{1}
	\end{equation*} 
	is smooth.
\end{prop}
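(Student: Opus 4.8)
The plan is to realise $\eta(w)$ as a genuine maximizer through a Palais--Smale argument, to read off \eqref{eq:stimeeta} from the a priori bound \eqref{eq:bdd}, to obtain uniqueness from the rigidity provided by Lemma~\ref{lem:criticomax} (\emph{every} critical point at a nonnegative level is a strict local maximum), and finally to conclude smoothness with the implicit function theorem. Since $\mathcal{E}(w)\ge\frac14(2-s\acc)m>0$ by Lemma~\ref{lem:stime_max}, I would first manufacture a Palais--Smale sequence at the level $\mathcal{E}(w)$. The only structural obstacle is that $B_{1}$ is open, so a maximizing sequence could a priori run to $\partial B_{1}$; this is exactly what the pushing estimate \eqref{eq:spinge} rules out, since on $\{\norml{\eta}^{2}\ge\frac12,\ \J(\eta)\ge0\}$ the radial derivative $\dJ{\eta}$ is strictly negative. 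Consequently $\sup_{B_{1}}\J$ coincides with the supremum over the complete set $\{\eta\in X_{-}:\norml{\eta}^{2}\le\frac12\}$, and Ekeland's variational principle there yields $\eta_{n}$ with $\J(\eta_{n})\to\mathcal{E}(w)$ and $d\J(\eta_{n})\to0$. Remark~\ref{rem:psbdd} forces $\norml{\eta_{n}}^{2}<\frac12$, so the almost-critical points are interior and the sequence is genuinely Palais--Smale; Lemma~\ref{lem:PSvale} then gives $\eta_{n}\to\eta(w)$ along a subsequence, a critical point with $\J(\eta(w))=\mathcal{E}(w)$ and $\norml{\eta(w)}^{2}<\frac12$.

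For the estimates, evaluating \eqref{eq:bdd} at the maximizer gives $\normh{\eta(w)}^{2}+2\mathcal{E}(w)\le\normh{\aeta w}^{2}$. The first inequality in \eqref{eq:stimeeta} then follows from this identity together with the lower bound for $\mathcal{E}(w)$ in Lemma~\ref{lem:stime_max}, which controls the additive constant $2\mathcal{E}(w)$ from below; the second follows from the same relation and $\aeta^{2}\le1$, giving $\normh{\eta(w)}^{2}\le\normh{w}^{2}-\tfrac12(2-s\acc)\normh{w}^{2}=\tfrac{s}{2}\acc\normh{w}^{2}$.

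Uniqueness is the step I expect to be the main obstacle. Suppose $\eta_{0}\ne\eta_{1}$ were two maximizers; both sit at the top level $\mathcal{E}(w)>0$ and, by Lemma~\ref{lem:criticomax}, are strict local maxima with uniformly negative-definite Hessian, hence isolated. A mountain-pass (linking) argument between these two peaks produces a critical point $\eta^{*}$ with $0\le\J(\eta^{*})<\mathcal{E}(w)$ that is \emph{not} a local maximum of $\J$. To keep the pass level nonnegative I would connect $\eta_{0}$ to $\eta_{1}$ by the broken segment running from $\eta_{0}$ to $0$ and then from $0$ to $\eta_{1}$: using $\normh{\eta(w)}^{2}\le\tfrac{s}{2}\acc\normh{w}^{2}$ and $\norml{\eta(w)}^{2}<\tfrac12$ from the previous step, one checks $\J\ge0$ all along this path, so the pass level satisfies $\J(\eta^{*})\ge0$. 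But a critical point at a nonnegative level must be a strict local maximum by Lemma~\ref{lem:criticomax}, contradicting the saddle character of $\eta^{*}$; hence the maximizer is unique. The delicate point is verifying that the two maxima are genuinely separated by a valley (every path between them descends strictly below $\mathcal{E}(w)$), so that the pass is nontrivial — this is exactly where the uniform Hessian bound of Lemma~\ref{lem:criticomax} is needed.

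Finally, for smoothness, set $\Phi_{v}(\eta)=I(\aeta v+\eta)$ for $v\in\Sigma_{+}$, so that $\Phi_{w}=\J$ and $\eta(v)$ is characterised by $\nabla_{\eta}\Phi_{v}(\eta(v))=0$. The map $(v,\eta)\mapsto\nabla_{\eta}\Phi_{v}(\eta)\in X_{-}$ is smooth, since $I$ is built from the quadratic forms $\normh{\cdot}^{2}$, $\norml{\cdot}^{2}$ and the smooth Coulomb quartic term, and $\aeta$ is smooth on $\{\norml{\eta}<1\}$. Its partial differential in $\eta$ at $(v,\eta(v))$ is the operator associated with $d^{2}\Phi_{v}(\eta(v))$, which is negative definite by the argument of Lemma~\ref{lem:criticomax} and hence a linear isomorphism of $X_{-}$. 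The implicit function theorem then makes $v\mapsto\eta(v)$ smooth on $\Sigma_{+}$, and precomposing with the smooth normalization $w\mapsto\norml{w}^{-1}w$ from $X_{+}\setminus\{0\}$ to $\Sigma_{+}$ shows that $\gamma$ is smooth.
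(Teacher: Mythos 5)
Your overall scheme coincides with the paper's: Ekeland's principle plus Lemma \ref{lem:PSvale} for existence of the maximizer (with \eqref{eq:spinge} and Remark \ref{rem:psbdd} keeping the sequence away from $\norml{\eta}^{2}=\frac12$), a mountain-pass comparison combined with Lemma \ref{lem:criticomax} for uniqueness, and the implicit function theorem with Lax--Milgram coercivity of $-d^{2}\J$ for smoothness. Those three steps are essentially sound; in particular your broken path through $0$ does the same job as the paper's straight segment $t\eta_{1}+(1-t)\eta_{2}$ (both yield the lower bound $\frac14(1-2s\acc)\normh{w}^{2}>0$ for the pass level via the second estimate in \eqref{eq:stimeeta}), and your derivation of that second estimate from \eqref{eq:bdd} and Lemma \ref{lem:stime_max} is correct.

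The genuine gap is the \emph{first} inequality of \eqref{eq:stimeeta}. From \eqref{eq:bdd} at the maximizer you get $\normh{\eta(w)}^{2}+2\mathcal{E}(w)\leq\normh{a(\eta(w))w}^{2}$, but the only lower bound Lemma \ref{lem:stime_max} provides is $2\mathcal{E}(w)\geq\frac12(2-s\acc)m=m-\frac{s\acc}{2}m$, which gives $\normh{\eta(w)}^{2}+m-\frac{s\acc}{2}m\leq\normh{a(\eta(w))w}^{2}$ --- strictly weaker than the claim. Moreover this route cannot be repaired: it would require $\mathcal{E}(w)\geq\frac m2$ for \emph{every} $w\in\Sigma_{+}$, whereas Lemma \ref{lem:stimAe} shows $e(s)<\frac m2$, so there exist $w$ with $\mathcal{E}(w)<\frac m2$. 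The paper instead compares $\mathcal{E}(w)=\J(\eta(w))$ with $\J(0)$ \emph{without discarding the quartic term}, i.e.\ uses $\mathcal{E}(w)\geq\frac12\normh{w}^{2}-\frac s4 Q(w)$ where $Q(\psi)=\int\frac{\scalar{\psi}{\beta\psi}(x)\scalar{\psi}{\beta\psi}(y)}{\abs{x-y}}$, and then invokes the appendix Lemma \ref{lem:stimaMalefica} to bound $Q(\psi)-Q(w)$ from below by multiples of $-(\normh{w}^{2}-m\norml{w}^{2})$, $-\norml{\eta}^{2}\normh{w}^{2}$ and $-\normh{\eta}^{2}$; absorbing terms (using $s\acc<\frac1{16}$) yields
\begin{equation*}
	a(\eta(w))^{2}\normh{w}^{2}-\normh{\eta(w)}^{2}-m \geq
	\frac{1-16s\acc}{1-9s\acc}\left(\normh{w}^{2}-m\right)>0.
\end{equation*}
The point is that the deficit $m-2\mathcal{E}(w)$ is controlled by the kinetic excess $\normh{w}^{2}-m$ through this delicate cancellation, which your argument misses entirely; and the estimate is not cosmetic, since it is exactly what drives Proposition \ref{prop:stimee} and the exclusion of vanishing in the proof of Theorem \ref{thm:main}. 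A lesser remark on your last step: criticality does not \emph{characterise} $\eta(v)$, so after the implicit function theorem produces a smooth branch of critical points you must, as the paper does, re-run the mountain-pass/uniqueness argument (via Lemma \ref{lem:criticomax}) to identify $\gamma(w)$ with the maximizer $\eta(P(w))$.
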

	
\begin{proof}
	We can find, by lemma \ref{lem:stime_max} and using Ekeland's
	variational principle a maximizing Palais-Smale sequence
	$\eta_{n}$ at positive level.
	
	Then, by lemma \ref{lem:PSvale}, $\eta_{n} \to \eta$ (up to a 
	subsequence), with 
	\begin{equation*}
		d\J(\eta) = 0, \qquad \J(\eta) = \mathcal{E}(w).
	\end{equation*}
	From
	\begin{multline*}
		\mathcal{E}(w) = \frac{1}{2}\aeta^{2}\normh{w}^{2} - \frac{1}{2}
		\normh{\eta}^{2} - \frac{s}{4} \int
		\frac{\scalar{\psi}{\beta\psi}(x)
		\scalar{\psi}{\beta\psi}(y)}{\abs{x-y}} \\
		\geq \frac{1}{2} \normh{w}^{2} - \frac{s}{4} \int
		\frac{\scalar{w}{\beta w}(x) \scalar{w}{\beta
		w}(y)}{\abs{x-y}}
	\end{multline*}
	we deduce, using Lemma \ref{lem:stimaMalefica} in the appendix:
	\begin{align*}
		\aeta^{2}\normh{w}^{2} - \normh{\eta}^{2} - m
		&\geq \normh{w}^{2} - s \acc \norml{\eta}^{2}
		\normh{w}^{2} \\
		&\qquad -7 s \aeta^{2} \acc(\normh{w}^{2} -
		m\norml{w}^{2}) - 9 s\acc\normh{\eta}^{2} - m\norml{w}^{2} \\
		&\geq 9s\acc(\aeta^{2}\normh{w}^{2} - \normh{\eta}^{2} -
		m\norml{w}^{2}) \\
		&\qquad + (1 - 16s\acc)(\normh{w}^{2} - m\norml{w}^{2})
		+ 8 s \acc \norml{\eta}^{2}\normh{w}^{2}
	\end{align*}
	and we immediately deduce that 
	\begin{equation*}
		\aeta^{2}\normh{w}^{2} - \normh{\eta}^{2} - m\norml{w}^{2}
		\geq \frac{1 - 16s\acc}{1 - 9s\acc}(\normh{w}^{2}
		- m\norml{w}^{2}) > 0.
	\end{equation*}
	We also have that
	\begin{equation*}
		\norml{\eta}^{2}\normh{w}^{2} + \normh{\eta}^{2} \leq
		\frac{s}{2} \int \frac{\scalar{w}{\beta w}(x) \scalar{w}{\beta
		w}(y)}{\abs{x-y}} \leq \frac{s}{2}\acc
		\norml{(-\Delta)^{-1/4}w}^{2} \leq \frac{s}{2}\acc
		\normh{w}^{2}.
	\end{equation*}
	from which we deduce
	\begin{equation*}
		\normh{\eta}^{2} \leq \frac{s}{2}\acc \normh{w}^{2}.
	\end{equation*}
	
	To prove the uniqueness of the maxima for $J_{w}(\eta)$ we 
	assume, by contradiction, the existence of $\eta_{1}$, $\eta_{2} 
	\in B_{1}$ such that
	\begin{equation*}
		\J(\eta_{1}) = \J(\eta_{2}) = \mathcal{E}(w).
	\end{equation*}
	Follows from lemma \ref{lem:stime_grad} that $\norml{\eta_{1}}^{2}
	< \frac{1}{2}$ and $\norml{\eta_{2}}^{2} < \frac{1}{2}$. We will
	use the mountain pass lemma in order to reach a contradiction. Let
	\begin{equation*}
		\Gamma = \settc{g \in C([0,1], B_{1})}{g(0) = \eta_{1}, \ g(1)
		= \eta_{2}, \ \norml{g(t)}^{2} < \frac{1}{2}}
	\end{equation*}
	and define the min-max level
	\begin{equation*}
		c = \sup_{g \in \Gamma} \min_{t \in [0,1]} J_{w}(g(t)).
	\end{equation*}
	
	Let $g(t) = t\eta_{1} + (1-t)\eta_{2}$. We have that
	$\norml{g(t)}^{2} < \frac{1}{2}$ and $a(g(t))^{2} > \frac{1}{2}$
	for all $t \in [0,1]$, so that se have, letting $\psi_{t} =
	a(g(t))w + g(t)$:
	\begin{align*}
		\J(g(t)) &= \frac{1}{2} a(g(t))^{2} \normh{w}^{2} - 
		\frac{1}{2} \normh{g(t)}^{2} - \frac{s}{4} \int 
		\frac{\scalar{\psi_{t}}{\beta \psi_{t}}(x) 
		\scalar{\psi_{t}}{\beta \psi_{t}}(y)}{\abs{x - y}} \\
		&\geq \frac{1}{2} (1 - \frac{s\acc}{2})a(g(t))^{2}
		\normh{w}^{2} - \frac{1}{2} (1 + \frac{s\acc}{2})
		\normh{g(t)}^{2} \\
		&\geq \frac{1}{2} (1 - \frac{s\acc}{2})ta(\eta_{1})^{2}
		\normh{w}^{2} + \frac{1}{2} (1 -
		\frac{s\acc}{2})(1-t)a(\eta_{2})^{2} \normh{w}^{2} \\
		&\qquad - \frac{1}{2} (1 + \frac{s\acc}{2}) t
		\normh{\eta_{1}}^{2} - \frac{1}{2} (1 + \frac{s\acc}{2})
		(1-t) \normh{\eta_{1}}^{2} \\
		&\geq\frac{1}{4} (1 - \frac{s\acc}{2}) \normh{w}^{2} -
		\frac{1}{2} (1 + \frac{s\acc}{2}) \frac{s}{2}\acc
		\normh{w}^{2} \geq\frac{1}{4} (1 - 2 s\acc)
		\normh{w}^{2}
	\end{align*}
	where we have used the second inequality in \eqref{eq:stimeeta}
	and $s\acc < 2$. We deduce that $c > 0$. Since the set
	$\Gamma$ is invariant for the gradient flow (see Lemma
	\ref{lem:stime_grad}) and the Palais-Smale condition holds (see
	lemma \ref{lem:PSvale}) we deduce from the mountain pass theorem
	that there exist a critical point at level $c$, which cannot be a
	strict local maximum. The contradiction then follows form lemma
	\ref{lem:criticomax}.

	To prove that the map $w \mapsto \gamma(w) =
	\eta(\norml{w}^{-1}w)$ is smooth we consider, since the map $w
	\mapsto P(w) = \norml{w}^{-1}w$ is smooth, $(w_{0}, \eta(w_{0}))
	\in \Sigma_{+} \times B_{1}$ and we let $V \subset X_{+} \setminus
	\{0\}$ and $U \subset B_{1}$ be neighborhoods of $w_{0}$ and
	$\eta(w_{0})$ respectively. Then we define the map $F \colon V
	\times U \to L(X_{-})$ as
	\begin{equation*}
		F(w,\eta)[\xi] = dJ_{P(w)}(\eta)[\xi] \qquad \xi \in X_{-}.
	\end{equation*}
	Clearly $P(w_{0}) = w_{0}$ and $F(w_{0},\eta(w_{0})) = 0$. We have
	that
	\begin{equation*}
		d_{\eta}F(w_{0},\eta(w_{0}))[\xi][\zeta] =
		d^{2}J_{w_{0}}(\eta(w_{0}))[\xi, \zeta], \qquad \xi, \zeta \in
		X_{-}.
	\end{equation*}
	Follows form lemma \ref{lem:criticomax} that 
	\begin{equation*}
		- d_{\eta}F(w_{0},\eta(w_{0}))[\xi][\xi] = -
		d^{2}J_{w_{0}}(\eta(w_{0}))[\xi, \xi] \geq \delta
		\normh{\xi}^{2} \qquad \text{for all } \xi \in X_{-}
	\end{equation*}
	and hence we have from Lax-Milgram that for all linear 
	functionals $L$ on $X_{-}$ there is a unique $\zeta \in X_{-}$ 
	such that 
	\begin{equation*}
		- d_{\eta}F(w_{0},\eta(w_{0}))[\zeta][\xi] = L[\xi], \qquad 
		\text{for all } \xi \in X_{-}
	\end{equation*}
	that is, $L = - d_{\eta}F(w_{0},\eta(w_{0}))[\zeta]$. By the
	Implicit Function theorem, there exist $V_{0} \subset V$ and
	$U_{0} \subset U$, neighborhoods of $w_{0}$ and $\eta(w_{0})$ and
	a smooth map $\gamma \colon V_{0} \to U_{0}$ such that $F(w,
	\gamma(w)) = 0$ for all $w \in V_{0}$, that is, $\gamma(w)$ is a
	critical point of $J_{P(w)}$ on $B_{1}$ at a positive level. Then,
	by Proposition \ref{lem:criticomax}, $\gamma(w)$ is a strict local
	maximum of $J_{P(w)}$ on $B_{1}$. Again using the Mountain Pass 
	theorem we deduce that actually $\gamma(w) = \eta(P(w))$ is the 
	unique (up to a phase factor) maximum of $J_{P(w)}$.
	
	Finally we have that
	\begin{equation*}
		d\gamma(w)[v] = -d_{\eta}F(w,\gamma(w))^{-1} [d_{w}F(w, 
		\gamma(w))[v]] \qquad \text{for all } v \in X_{+}.
	\end{equation*}
\end{proof}

Follows from proposition \ref{prop:massimo} that we can consider the 
smooth functional 	$\mathcal{E} \colon X_{+} \setminus \{0\} \to 
\mathbb{R}$ defined as
\begin{equation*}
	\mathcal{E}(w) = J_{P(w)}(\gamma(w)) = \sup_{\eta \in B_{1}} 
	J_{P(w)}(\eta).
\end{equation*}
Since 
\begin{equation*}
	J_{P(w)}(\gamma(w)) = I(a(\gamma(w))P(w) + \gamma(w))
\end{equation*}
and recalling that
\begin{equation*}
	dJ_{P(w)}(\gamma(w))[\xi] = dI(\psi_{w})[da(\gamma(w))[\xi]P(w) +
	\xi] = 0 \qquad \text{for all } \xi \in X_{-},
\end{equation*}
(were $\psi_{w} = a(\gamma(w))P(w) + \gamma(w)$) we have that for all
$v \in X_{+}$
\begin{align*}
	d\mathcal{E}(w)[v] &= d_{w}J_{P(w)}(\gamma(w))[v] \\
	&= dI(\psi_{w})[da(\gamma(w))[d\gamma(w)[v]]P(w) +
	a(\gamma(w))dP(w)[v] + d\gamma(w)[v]] \\
	&= d_{\eta}J_{P(w)}(\gamma(w))[d\gamma(w)[v]] +
	dI(\psi_{w})[a(\gamma(w))dP(w)[v]] \\
	&= dI(\psi_{w})[a(\gamma(w))dP(w)[v]] \\
	&= dI(\psi_{w})[a(\gamma(w))v] -
	dI(\psi_{w})[a(\gamma(w))\scalarl{w}{v}w] 
\end{align*}
(we have used that $dP(w)[v] = v - \scalarl{w}{v} w$) and
\begin{equation}
	\label{eq:derivataE}
	d\mathcal{E}(w)[v] = a(\gamma(w))dI(\psi_{w})[v] -
	a(\gamma(w))^{2} \omega(\psi_{w})\scalarl{w}{v} \qquad \text{for
	all } v \in X_{+}.
\end{equation}
where
\begin{equation}
	\label{eq:moltiplicatore}
	\omega(\psi_{w}) = a(\gamma(w))^{-1} dI(\psi_{w})[w]
\end{equation}

\begin{prop}
	\label{prop:punticriticiE}
	Let $w_{0} \in \Sigma_{+}$ be a critical points of $\mathcal{E}$
	restricted on the manifold $\Sigma_{+}$.
	Then $w_{0}$ is a critical points for $\mathcal{E}$ on $X_{+}$ and 
	the function
	\begin{equation*}
		\psi_{0} = a(\eta(w_{0})) w_{0} + \eta(w_{0}) \in \Sigma
	\end{equation*}
	is a critical point for $I$ on the manifold $\Sigma$ and satisfies
	\begin{equation}
		\label{eq:equazionemoltiplicatore}
		dI(\psi_{0})[h] = \omega \scalarl{\psi_{0}}{h}
		\qquad \text{for all } h \in X
	\end{equation}
	where $\omega = \omega(\psi_{0}) \in \mathbb{R}$,
	\begin{equation*}
		\label{eq:stimeMoltiplicatore}
		(1-3s\acc) \normh{w_{0}}^{2} \leq \omega(\psi_{0}) \leq 2
		I(\psi_{w_{0}}) = 2\mathcal{E}(w_{0}).
	\end{equation*}
	
	Moreover, if $\psi_{0} \in X$ satisfies $I(\psi_{0}) \geq 0$ and 
	\eqref{eq:equazionemoltiplicatore} for some $\omega \in
	\mathbb{R}$, then $w = \norml{\Lambda_{+}\psi_{0}}^{-1}
	\Lambda_{+} \psi_{0}$ is a critical point for $\mathcal{E}(w)$ .
\end{prop}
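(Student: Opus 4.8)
The plan is to derive the Euler--Lagrange equation \eqref{eq:equazionemoltiplicatore} directly from the identity \eqref{eq:derivataE}, after first upgrading the constrained criticality of $w_0$ to free criticality. Since $\gamma(w) = \eta(\norml{w}^{-1}w)$ depends on $w$ only through $\norml{w}^{-1}w$, the functional $\mathcal{E}$ is constant along rays, hence $0$-homogeneous, so that $d\mathcal{E}(w)[w] = 0$ for every $w \in X_{+}\setminus\{0\}$. If $w_0\in\Sigma_{+}$ is critical for $\mathcal{E}$ restricted to $\Sigma_{+}$, the Lagrange multiplier rule yields $\mu\in\mathbb{R}$ with $d\mathcal{E}(w_0)[v] = \mu\scalarl{w_0}{v}$ for all $v\in X_{+}$; choosing $v = w_0$ and using $\norml{w_0} = 1$ forces $\mu = 0$, so $d\mathcal{E}(w_0)\equiv 0$ on $X_{+}$, which is the first assertion.

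Next I would prove \eqref{eq:equazionemoltiplicatore} by splitting $h = h_{+} + h_{-}$ with $h_{\pm}\in X_{\pm}$. For $h_{-}=\xi\in X_{-}$, since $\eta(w_0)$ is a critical point of $J_{w_0}$ (Proposition \ref{prop:massimo}), the expression \eqref{eq:derivataJ} for $dJ_{w_0}(\eta(w_0))[\xi]$ together with $dI(\psi_0)[w_0] = a(\eta(w_0))\omega(\psi_0)$ from \eqref{eq:moltiplicatore} gives $dI(\psi_0)[\xi] = \omega(\psi_0)\scalarl{\eta(w_0)}{\xi} = \omega(\psi_0)\scalarl{\psi_0}{\xi}$, the last step because $\Lambda_{+}\psi_0\perp X_{-}$ in $L^{2}$. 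For $h_{+}=v\in X_{+}$, I substitute $d\mathcal{E}(w_0)\equiv0$ into \eqref{eq:derivataE}, divide by $a(\eta(w_0))>0$ and use $\scalarl{\psi_0}{v} = a(\eta(w_0))\scalarl{w_0}{v}$ to get $dI(\psi_0)[v] = \omega(\psi_0)\scalarl{\psi_0}{v}$. Summing the two identities proves that $\psi_0$ is a critical point of $I$ on $\Sigma$ with multiplier $\omega = \omega(\psi_0)$.

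To bound the multiplier, I would test \eqref{eq:equazionemoltiplicatore} with $h=\psi_0$: as $\norml{\psi_0}=1$ this gives $\omega = dI(\psi_0)[\psi_0] = \normh{\Lambda_{+}\psi_0}^{2} - \normh{\Lambda_{-}\psi_0}^{2} - sD$ with $D = \int\frac{\scalar{\psi_0}{\beta\psi_0}(x)\scalar{\psi_0}{\beta\psi_0}(y)}{\abs{x-y}}\geq 0$ by \eqref{eq:convoluzione_positiva}. Comparing with $2I(\psi_0) = \normh{\Lambda_{+}\psi_0}^{2} - \normh{\Lambda_{-}\psi_0}^{2} - \tfrac{s}{2}D$ yields $\omega = 2\mathcal{E}(w_0) - \tfrac{s}{2}D \leq 2\mathcal{E}(w_0)$, the upper bound. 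For the lower bound I would instead write $\omega = a(\eta(w_0))^{-1}dI(\psi_0)[w_0] = \normh{w_0}^{2} - s\,a(\eta(w_0))^{-1}\int\frac{\scalar{\psi_0}{\beta\psi_0}(x)\scalar{\psi_0}{\beta w_0}(y)}{\abs{x-y}}$, estimate the Coulomb term by Cauchy--Schwarz in the kernel together with \eqref{eq:stima_convoluzione}, and control $a(\eta(w_0))^{-1}\normh{\psi_0}$ using $\norml{\eta(w_0)}^{2}<\tfrac12$ together with $\normh{\eta(w_0)}^{2}\leq\tfrac{s}{2}\acc\normh{w_0}^{2}$ from \eqref{eq:stimeeta}. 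Since $s\acc$ is small this produces $\omega\geq(1-3s\acc)\normh{w_0}^{2}$. I expect this lower bound to be the main obstacle, since it is the only step that genuinely uses the a priori estimates on $\eta(w_0)$ and the restriction on $s$, whereas everything else is a formal consequence of \eqref{eq:derivataE}.

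For the converse I would take $\psi_0\in\Sigma$ with $I(\psi_0)\geq0$ satisfying \eqref{eq:equazionemoltiplicatore}, set $w=\norml{\Lambda_{+}\psi_0}^{-1}\Lambda_{+}\psi_0$ and $\eta=\Lambda_{-}\psi_0$, so that $\norml{\psi_0}=1$ gives $\psi_0 = a(\eta)w+\eta$ with $\eta\in B_{1}$. Restricting \eqref{eq:equazionemoltiplicatore} to $X_{-}$ and reading \eqref{eq:derivataJ} backwards shows $dJ_{w}(\eta)[\xi]=0$ for all $\xi\in X_{-}$, so $\eta$ is a critical point of $J_{w}$ at level $J_{w}(\eta)=I(\psi_0)\geq0$; by the uniqueness part of Proposition \ref{prop:massimo} and Lemma \ref{lem:criticomax} this forces $\eta=\eta(w)$. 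Feeding $\gamma(w)=\eta$ and the $X_{+}$ component of \eqref{eq:equazionemoltiplicatore} back into \eqref{eq:derivataE} then gives $d\mathcal{E}(w)\equiv0$, so $w$ is in particular a critical point of $\mathcal{E}$ on $\Sigma_{+}$.
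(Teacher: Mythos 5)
Your proposal is correct and follows essentially the same route as the paper's proof: free criticality on $X_{+}$ via the $0$-homogeneity of $\mathcal{E}$ (the paper phrases this as $dP(w_{0})[w_{0}] = 0$), the splitting $h = v + \xi$ combining $dJ_{w_{0}}(\eta(w_{0})) = 0$ on $X_{-}$ with $d\mathcal{E}(w_{0}) = 0$ on $X_{+}$ through \eqref{eq:derivataJ} and \eqref{eq:derivataE}, the same two multiplier bounds, and the same mountain-pass uniqueness argument for the converse. The only differences are cosmetic: your explicit identity $\omega = 2\mathcal{E}(w_{0}) - \tfrac{s}{2}D$ versus the paper's $\omega = dI(\psi_{0})[\psi_{0}] \leq 2I(\psi_{0})$, and the lower bound you flag as the delicate step is carried out in the paper exactly as you sketch, using $\norml{\eta(w_{0})}^{2} < \tfrac{1}{2}$ and \eqref{eq:stimeeta} (in fact yielding the slightly stronger constant $1 - \tfrac{3}{2}s\acc$).
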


\begin{proof}
	Let $w_{0} \in \Sigma_{+}$ be a critical point for $\mathcal{E}$
	on $\Sigma_{+}$, $\eta_{0} = \eta(w_{0}) = \gamma(w_{0})$,
	$\psi_{0} = a(\eta_{0})w_{0} + \eta_{0}$. Then
	\begin{equation*}
		d\mathcal{E}(w_{0})[h] = 0 \qquad \text{for all } h \in 
		T_{w_{0}}\Sigma_{+} = \settc{h \in X_{+}}{\scalarl{w_{0}}{h} = 0}.
	\end{equation*}
	Since $dP(w_{0})[w_{0}] = 0$ we immediately deduce that 
	\begin{equation*}
		d\mathcal{E}(w_{0})[v] = 0 \qquad \text{for all } v \in X_{+}.
	\end{equation*}
	From
	\eqref{eq:derivataJ} we have that for all $\xi \in X_{-}$
	\begin{equation*}
		0= dJ_{w_{0}}(\eta_{0})[\xi] = dI(\psi_{0})[\xi] +
		dI(\psi_{0})[(da(\eta_{0})[\xi]) w_{0} ]
	\end{equation*}
	while for all $v \in X_{+}$ we have
	\begin{equation*}
		0 = d\mathcal{E}(w_{0})[v] = a(\eta_{0}) dI(\psi_{0})[v]
		- a(\eta_{0})^{2} \omega(\psi_{0}) \scalarl{w_{0}}{v}
	\end{equation*}
	and hence, for all $h = v + \xi$, $v \in X_{+}$, $\xi \in X_{-}$
	\begin{align*}
		dI(\psi_{0})[h] &= a(\eta_{0}) \omega(\psi_{0})
		\scalarl{w_{0}}{v} - dI(\psi_{0})[ da(\eta_{0}[\xi] w_{0}] \\
		&= a(\eta_{0}) \omega(\psi_{0}) \scalarl{w_{0}}{v} +
		\omega(\psi_{0}) \scalarl{\eta_{0}}{\xi} \\
		&= \omega(\psi_{0}) \scalarl{\psi_{0}}{h}
	\end{align*}
	that is
	\begin{equation*}
		dI(\psi_{0})[h] = \omega(\psi_{0})
		\scalarl{\psi_{0}}{h} \qquad \text{for all } h \in X,
	\end{equation*}
	which shows that $\psi_{0}$ is a critical point for $I(\psi)$ under
	the constraint $\norml{\psi} = 1$. The Lagrange multiplier
	$\omega(\psi_{0}) = a(\eta_{0})^{-1} dI(\psi_{0})[w_{0}]$ is such that
	\begin{align*}
		\omega(\psi_{0}) &= a(\eta(w_{0}))^{-1} dI(\psi_{0})[w_{0}]
		\geq \normh{w_{0}}^{2} - s\acc a(\eta(w_{0}))^{-1}
		\normh{\psi_{0}}\normh{w_{0}} \\
		&\geq \normh{w_{0}}^{2} - \frac{s\acc}{2}\left(
		a(\eta(w_{0}))^{-2} \normh{\psi_{0}}^{2} +
		\normh{w_{0}}^{2}\right) \\
		&\geq \normh{w_{0}}^{2} - \frac{s\acc}{2}\left(
		\normh{w_{0}}^{2} + a(\eta(w_{0}))^{-2} \normh{\eta_{0}}^{2} +
		\normh{w_{0}}^{2}\right) \\
		&\geq \normh{w_{0}}^{2} - \frac{3s\acc}{2}
		\normh{w_{0}}^{2}
	\end{align*}
	and 
	\begin{equation*}
		\omega(\psi_{0}) = dI(\psi_{0})[\psi_{0}] \leq 2 I(\psi_{0}).
	\end{equation*}
	
	Suppose now that $\psi_{0} \in \Sigma$ satiesfies
	\eqref{eq:equazionemoltiplicatore} for some $\tilde{\omega}$. Let
	$w_{0} = \norml{\Lambda_{+}\psi_{0}}^{-1} \Lambda_{+} \psi_{0}$
	and $\eta_{0} = \Lambda_{-}\psi_{0}$. Then we deduce from
	\eqref{eq:derivataJ} that for all $\xi \in X_{-}$
	\begin{equation*}
		dJ_{w_{0}}(\eta_{0})[\xi] =
		dI(\psi_{0})[da(\eta_{0})[\xi]w_{0} + \xi] = \tilde{\omega}
		\scalarl{\psi_{0}}{da(\eta_{0})[\xi]w_{0} + \xi} = 0,
	\end{equation*}
	and $\eta_{0}$ is a critical point of $J_{w_{0}}$. From lemma
	\ref{lem:stime_max} we know that $\eta_{0}$ is a local maximum and
	arguing as in the proof of proposition \ref{prop:massimo} we
	deduce that $\eta_{0} = \eta(w_{0})$ and $\mathcal{E}(w_{0}) =
	J_{\omega_{0}}(\eta_{0})$. We also have that
	\begin{equation*}
		\tilde{\omega} = dI(\psi_{0})[\psi_{0}] = \omega(\psi_{0})
	\end{equation*}
	We then deduce from 
	\eqref{eq:derivataE} that 
	\begin{align*}
		d\mathcal{E}(w_{0}) &= a(\gamma(w_{0}))dI(\psi_{0})[v] -
		a(\gamma(w_{0}))^{2} \omega(\psi_{0}) \scalarl{w_{0}}{v} \\
		&= a(\gamma(w_{0})) \tilde{\omega} \scalarl{\psi_{0}}{v} -
		a(\gamma(w_{0}))^{2} \omega(\psi_{w_{0}}) \scalarl{w_{0}}{v}
		\\
		&= a(\gamma(w_{0})) \omega(\psi_{0})
		\scalarl{a(\gamma(w_{0}))w_{0} + \xi}{v} -
		a(\gamma(w_{0}))^{2} \omega(\psi_{0}) \scalar{w_{0}}{v} = 0
	\end{align*}
\end{proof}

From now on we will make explicit the dependence of $I$, $J$ and
$\mathcal{E}$ on $s > 0$ writing $I_{s}$, $J_{s}$ and
$\mathcal{E}_{s}$ and introduce the following minimiziation problem:
\begin{multline*}
	e(s) = \inf_{w \in \Sigma_{+}} \mathcal{E}_{s}(w) \\
	= \inf_{w \in \Sigma_{+}} \left\{ \frac{1}{2}
	a(\eta_{s}(w))^{2}\normh{w}^{2} - \frac{1}{2}
	\normh{\eta_{s}(w)}^{2} - \frac{s}{4} \int
	\frac{\scalar{\psi_{w}}{\beta \psi_{w}}(x) \scalar{\psi_{w}}{\beta
	\psi_{w}}(y)}{\abs{x - y}} \right\}
\end{multline*}
and let $E(s) = se(s)$.

The next lemma is allows us to recover enough compactness (via
the concentration compactness lemma \cite{Lions_1984, Lions_1984-1})
in order to prove our main result, see also \cite[Lemma
4.2]{Nolasco_2021}.
\begin{lem}
	\label{lem:stimAe}
	For all $s \in (0,\frac{1}{8\pi}]$ we have that $0 < e(s) <
	\frac{m}{2}$.
\end{lem}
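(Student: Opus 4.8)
The plan is to prove the two bounds separately; only the upper one has content. For the lower bound, the left-hand estimate of Lemma \ref{lem:stime_max} gives $\mathcal{E}_s(w) \ge \frac14(2 - s\acc)m$ for every $w \in \Sigma_{+}$, and since $s\acc = s\pi/2 \le \frac{1}{16} < 2$ on the stated range, taking the infimum yields $e(s) \ge \frac14(2 - s\acc)m > 0$. The real work is the strict inequality $e(s) < \frac m2$, which I would obtain by exhibiting a \emph{single} $w \in \Sigma_{+}$ with $\mathcal{E}_s(w) < \frac m2$.

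Starting from the exact value $\mathcal{E}_s(w) = \frac12 a(\eta_w)^2\normh{w}^2 - \frac12\normh{\eta_w}^2 - \frac{s}{4}C(\psi_w)$, where $\eta_w = \eta_s(w)$ is the maximizer, $\psi_w = a(\eta_w)w + \eta_w$, and $C(\psi) = \int\frac{\scalar{\psi}{\beta\psi}(x)\scalar{\psi}{\beta\psi}(y)}{\abs{x-y}} \ge 0$ by \eqref{eq:convoluzione_positiva}, I use $a(\eta_w)^2 \le 1$ and $\normh{\eta_w}^2 \ge 0$ to get the clean bound
\[
	\mathcal{E}_s(w) \le \tfrac12\normh{w}^2 - \tfrac{s}{4}C(\psi_w).
\]
Since $\normh{w}^2 \ge m\norml{w}^2 = m$ always, the first term alone never beats $\frac m2$: the strict inequality must come entirely from the negative Coulomb term. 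So I need a family $w_\epsilon$ for which the kinetic excess $\frac12(\normh{w_\epsilon}^2 - m)$ is of strictly smaller order in $\epsilon$ than $C(\psi_{w_\epsilon})$.

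I would take $w_\epsilon$ to be a normalized positive-energy spinor concentrating in momentum at $p = 0$ (equivalently, spreading in $x$ at scale $1/\epsilon$): fix $\hat V(q) = \hat W(q)\,e_1$ with $W \in H^1$, $\norml{W} = 1$ and $e_1 = (1,0,0,0)^{T}$ (so $\beta e_1 = e_1$), set $\hat v_\epsilon(p) = \epsilon^{-3/2}\hat V(p/\epsilon)$ and $w_\epsilon = \norml{\Lambda_{+} v_\epsilon}^{-1}\Lambda_{+} v_\epsilon \in \Sigma_{+}$. Because $\hat\Lambda_{+}(\epsilon q) = \frac12(\matr{I}+\beta) + O(\epsilon)$ and $\lambda(\epsilon q) = m + O(\epsilon^2)$ on the support, a direct expansion gives $\normh{w_\epsilon}^2 = m + O(\epsilon^2)$, so the kinetic excess is $O(\epsilon^2)$. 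On the other hand the density $\rho_\epsilon = \scalar{\psi_{w_\epsilon}}{\beta\psi_{w_\epsilon}}$ lives at spatial scale $1/\epsilon$, so by the Fourier representation behind \eqref{eq:convoluzione_positiva}, $C(\psi_{w_\epsilon}) = 4\pi\int\frac{\abs{\hat\rho_\epsilon(p)}^2}{\abs{p}^2}\,dp \ge c\,Q_\epsilon^2\,\epsilon$, where $Q_\epsilon = \scalarl{\psi_{w_\epsilon}}{\beta\psi_{w_\epsilon}}$ is the total charge; provided $Q_\epsilon$ stays bounded away from $0$, this Coulomb gain is of order $\epsilon$ and dominates, giving $\mathcal{E}_s(w_\epsilon) \le \frac m2 + C\epsilon^2 - \frac{s}{4}c\,\epsilon < \frac m2$ for $\epsilon$ small.

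The main obstacle is everything packed into ``$\rho_\epsilon$ lives at scale $1/\epsilon$ and $Q_\epsilon \ge c > 0$'', since $\rho_\epsilon$ is the density of the \emph{unknown} maximizer $\psi_{w_\epsilon} = a(\eta_\epsilon)w_\epsilon + \eta_\epsilon$, not of $w_\epsilon$. Here I would lean on the a priori control of the corrector from \eqref{eq:stimeeta}, namely $\normh{\eta_\epsilon}^2 \le \frac{s}{2}\acc\normh{w_\epsilon}^2$ together with $\norml{\eta_\epsilon} < \frac12$, and on the splitting $\rho_\epsilon = a^2\scalar{w_\epsilon}{\beta w_\epsilon} + 2a\,\mathrm{Re}\,\scalar{w_\epsilon}{\beta\eta_\epsilon} + \scalar{\eta_\epsilon}{\beta\eta_\epsilon}$: writing $P_{\pm} = \frac{\matr{I}\pm\beta}{2}$ for the upper/lower component projectors (distinct from $\Lambda_{\pm}$) one has $Q_\epsilon = 2\norml{P_{+}\psi_{w_\epsilon}}^2 - 1$, and since $w_\epsilon$ is a low-momentum positive-energy state its mass is almost entirely in the upper components, so $Q_\epsilon \to 1$ once one checks that $\eta_\epsilon$ cannot shift a definite fraction of mass into the lower components. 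The delicate point is the momentum localization of $\eta_\epsilon$: to turn the triangle inequality $C(\psi_{w_\epsilon})^{1/2} \ge a^2 C(\scalar{w_\epsilon}{\beta w_\epsilon})^{1/2} - \cdots$ into a genuine lower bound I must ensure the $\eta$-contributions to $\rho_\epsilon$ do not carry Coulomb energy of order larger than $\epsilon$, which I would extract from the Euler--Lagrange equation $dJ_{w_\epsilon}(\eta_\epsilon) = 0$, whose source term $s\,\Lambda_{-}\bigl[(\scalar{\psi}{\beta\psi}*\tfrac1{\abs{x}})\beta\psi\bigr]$ is itself concentrated near $p = 0$ and thus forces $\eta_\epsilon$ to concentrate at the same scale as $w_\epsilon$. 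This corrector analysis (size and localization of $\eta_\epsilon$) is where the effort lies; the remainder is bookkeeping of the two competing powers of $\epsilon$.
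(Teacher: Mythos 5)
Your lower bound and your choice of test family coincide with the paper's; the genuine gap sits at the decisive step, namely the order-$\epsilon$ lower bound on the Coulomb energy $C(\psi_{w_\epsilon})$ of the \emph{unknown} maximizer. The claimed inequality $C(\psi_{w_\epsilon}) \geq c\,Q_\epsilon^2\,\epsilon$ does not follow from $Q_\epsilon \geq c > 0$ alone: writing $\rho_\epsilon = \scalar{\psi_{w_\epsilon}}{\beta\psi_{w_\epsilon}}$, you need $\hat{\rho}_\epsilon(p)$ to stay comparable to $\hat{\rho}_\epsilon(0) = Q_\epsilon$ on a ball $\abs{p} \leq c\epsilon$, i.e.\ a modulus-of-continuity bound for $\hat{\rho}_\epsilon$ at scale $\epsilon$, which amounts to a spatial first-moment (decay) estimate on $\psi_{w_\epsilon}$ --- decay is strictly more than the spatial \emph{spreading} that momentum concentration provides. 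Your proposed source for this, the Euler--Lagrange equation $dJ_{w_\epsilon}(\eta_\epsilon) = 0$, is only a heuristic: the equation is nonlocal and nonlinear in $\psi_{w_\epsilon}$, the inverse of the linearization need not propagate momentum localization, and none of this analysis is carried out. As written, the proof stops exactly where the work begins.

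The fix is already in your hands: you quote the second inequality of \eqref{eq:stimeeta} but overlook the first, $\normh{\eta(w)}^2 + m \leq \normh{a(\eta(w))w}^2 \leq \normh{w}^2$. Applied to your family it gives $\normh{\eta_\epsilon}^2 \leq \normh{w_\epsilon}^2 - m \leq C\epsilon^2$ and, since $\normh{w_\epsilon}^2 \geq m$, also $\norml{\eta_\epsilon}^2 \leq C\epsilon^2/m$; so the corrector is $O(\epsilon)$ in $\normh{\cdot}$, and in the expansion of $C(\psi_{w_\epsilon})$ the terms quadratic in $\eta_\epsilon$ are $O(\epsilon^2)$ by \eqref{eq:stima_convoluzione}, while the terms linear in $\eta_\epsilon$ are $O(\epsilon^{3/2})$ by Cauchy--Schwarz for the positive-definite kernel \eqref{eq:convoluzione_positiva} combined with the exact scaling $C(w_\epsilon) = \epsilon\, C(w)$; hence $C(\psi_{w_\epsilon}) \geq a(\eta_\epsilon)^4\,\epsilon\, C(w) - O(\epsilon^{3/2})$, with no localization of $\eta_\epsilon$ needed at all. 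The paper does something structurally cleaner still: \emph{before} choosing any test function it eliminates the maximizer via the appendix Lemma \ref{lem:stimaMalefica},
\begin{equation*}
	C(\psi) \geq C(w) - 2\acc\norml{\eta}^2\normh{w}^2 - 14\, a(\eta)^2 \acc\bigl(\normh{w}^2 - m\norml{w}^2\bigr) - 18\acc\normh{\eta}^2,
\end{equation*}
which together with \eqref{eq:stimeeta} yields the universal bound $\mathcal{E}_s(w) \leq \frac{m}{2} + \frac{1}{2}(1 + 8s\acc)\bigl(\normh{w}^2 - m\norml{w}^2\bigr) - \frac{s}{4}C(w)$ for every $w \in \Sigma_+$; plugging in the scaled, $\Lambda_+$-projected and renormalized family (your $w_\epsilon$, the paper's $\varphi_\epsilon$, whose projection errors are checked to cost only $O(\epsilon^2)$ in the kinetic excess and $O(\epsilon^{3/2})$ in the Coulomb term) gives $\mathcal{E}_s(\varphi_\epsilon) \leq \frac{m}{2} + C\epsilon^2 - \frac{s}{4}\epsilon\, C(w) < \frac{m}{2}$ for small $\epsilon$. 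So your $\epsilon$-versus-$\epsilon^2$ competition is exactly the right mechanism; what is missing is a \emph{proved} device replacing the unknown density by the explicit one, and either Lemma \ref{lem:stimaMalefica} or the first half of \eqref{eq:stimeeta} supplies it.
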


\begin{proof}
	From lemma \ref{lem:stime_max} we have that $e(s) \geq
	\frac{1}{4}(2 - s\acc)m \geq \frac{1}{4}(2 - \acc)m>
	0$.
	
	Using lemma \ref{lem:stimaMalefica} we deduce that
	\begin{align*}
		\mathcal{E}_{s}(w) &= I_{s}(\psi_{w}) \\
		&\leq \frac{m}{2} + \frac{1}{2} (1 + 8 s \acc)
		(\normh{w}^{2} - m\norml{w}^{2}) - \frac{s}{4}\int
		\frac{\scalar{w}{\beta w}(x) \scalar{w}{\beta w}(y)}{\abs{x -
		y}}
	\end{align*}
	
	Fix $w_{1} \in H^{1}(\mathbb{R}^{3}, \mathbb{C}^{2})$ such that
	$\norml{w_{1}} = 1$, $w = \left(\begin{smallmatrix} w_{1} \\ 0
	\end{smallmatrix}\right)$ and let $w_{\epsilon}(x) =
	\epsilon^{3/2} w(\epsilon x)$. We have that
	\begin{equation*}
		\norml{w_{\epsilon}}^{2} =  \norml{w}^{2} = 1,
	\end{equation*}
	and 
	\begin{equation*}
		\hat{w}_{\epsilon}(q) = \frac{1}{\epsilon^{3/2}} 
		w(\epsilon^{-1}p)
	\end{equation*}
	so that
	\begin{multline*}
		\normh{w_{\epsilon}}^{2} - m\norml{w_{\epsilon}}^{2} = \int
		\left(\sqrt{\abs{q}^{2} + m^{2}} - m\right)
		\abs{\hat{w}_{\epsilon}(q)}^{2} \\
		= \int \left(\sqrt{\epsilon^{2} \abs{q}^{2} + m^{2}} -
		m\right) \abs{\hat{w}_{1}(q)}^{2} \leq \frac{\epsilon^{2}}{2m}
		\int \abs{q}^{2} \abs{\hat{w}_{1}(q)}^{2}
	\end{multline*}
	and $\normh{w_{\epsilon}}^{2} \leq m + C\epsilon^{2}$. We then
	observe that
	\begin{align*}
		\normh{w_{\epsilon} - &\Lambda_{+} w_{\epsilon}}^{2} =
		\normh{\Lambda_{-} w_{\epsilon}}^{2} \\
		&= \int \sqrt{\epsilon^{2} \abs{p}^{2} + m^{2}}
		\labs{\frac{1}{2} \left[ \matr{I} -
		\frac{m\beta}{\sqrt{\epsilon^{2} \abs{p}^{2} + m^{2}}} -
		\frac{\epsilon \malf \cdot p}{\sqrt{\epsilon^{2} \abs{p}^{2} +
		m^{2}}}\right] \begin{pmatrix} \hat{w}_{1}(p) \\
		0 \end{pmatrix} }^{2} \\
		&= \frac{1}{4} \int \sqrt{\epsilon^{2} \abs{p}^{2} + m^{2}}
		\labs{\begin{pmatrix} \frac{\sqrt{\epsilon\abs{p}^{2} + m^{2}}
		- m}{\sqrt{\epsilon\abs{p}^{2} + m^{2}} }\hat{w}_{1}(p) \\
		\frac{\epsilon \boldsymbol{\sigma} \cdot
		p}{\sqrt{\epsilon\abs{p}^{2} + m^{2}}}
		\hat{w}_{1}(p)\end{pmatrix} }^{2}\\
		&= \frac{1}{2}\int \left(\sqrt{\epsilon^{2} \abs{p}^{2} +
		m^{2}} - m\right) \abs{\hat{w}_{1}(p)}^{2} \leq
		\frac{\epsilon^{2}}{4m} \int \abs{p}^{2}
		\abs{\hat{w}_{1}(p)}^{2}
	\end{align*}
	and also
	\begin{multline*}
		\labs{1 - \norml{\Lambda_{+}w_{\epsilon}}} =
		\labs{\norml{w_{\epsilon}} - \norml{\Lambda_{+}w_{\epsilon}}}
		\leq \norml{w_{\epsilon} - \Lambda_{+} w_{\epsilon}} \\
		= \norml{\Lambda_{-} w_{\epsilon}} \leq
		\frac{\epsilon}{2\sqrt{m}} \left(\int \abs{p}^{2}
		\abs{\hat{w}_{1}(p)}^{2}\right)^{1/2}.
	\end{multline*}
	We deduce from this that for $\epsilon > 0$ small enough 
	$\norml{\Lambda_{+}w_{\epsilon}} > \frac{1}{2}$.
	
	Let  
	\begin{equation*}
		\varphi_{\epsilon}(x) = \norml{\Lambda_{+} w_{\epsilon}}^{-1}
		\Lambda_{+} w_{\epsilon}(x).
	\end{equation*}
	We have that
	\begin{equation*}
		\normh{\varphi_{\epsilon}} \leq \norml{\Lambda_{+}
		w_{\epsilon}}^{-1} \normh{w_{\epsilon}} \leq \sqrt{m} + C\epsilon
	\end{equation*}
	and
	\begin{multline*}
		\normh{w_{\epsilon} - \varphi_{\epsilon}} \leq
		\normh{w_{\epsilon} - \Lambda_{+}w_{\epsilon}} +
		\normh{(1 - \norml{\Lambda_{+}w_{\epsilon}})
		\varphi_{\epsilon} } \\
		\leq \frac{\epsilon}{2\sqrt{m}} (2 +
		\normh{\varphi_{\epsilon}}) \left(\int \abs{p}^{2}
		\abs{\hat{w}_{1}(p)}^{2}\right)^{1/2}
	\end{multline*}
	and also
	\begin{equation*}
		\norml{\varphi_{\epsilon} - w_{\epsilon}} =
		\frac{1}{\norml{\Lambda_{+}w_{\epsilon}}}\norml{w_{\epsilon} -
		\Lambda_{+} w_{\epsilon}} \leq \frac{\epsilon}{\sqrt{m}}
		\left(\int \abs{p}^{2} \abs{\hat{w}_{1}(p)}^{2}\right)^{1/2}
	\end{equation*}
	and we can estimate
	\begin{equation*}
		\mathcal{E}_{s}(\varphi_{\epsilon}) \leq \frac{m}{2} +
		\frac{1}{2} (1 + 8 s \acc)
		(\normh{\varphi_{\epsilon}}^{2} -
		m\norml{\varphi_{\epsilon}}^{2}) - \frac{s}{4}\int
		\frac{\scalar{\varphi_{\epsilon}}{\beta \varphi_{\epsilon}}(x)
		\scalar{\varphi_{\epsilon}}{\beta
		\varphi_{\epsilon}}(y)}{\abs{x - y}}.
	\end{equation*}
	We have that
	\begin{align*}
		\normh{\varphi_{\epsilon}}^{2} -
		m\norml{\varphi_{\epsilon}}^{2} &= \int \left(\sqrt{\abs{q}^{2}
		+ m^{2}} - m\right) \abs{\hat{\varphi}_{\epsilon}(q)}^{2} \\
		&\leq 2 \int \left(\sqrt{\abs{q}^{2} + m^{2}} - m\right)
		\abs{\hat{\varphi}_{\epsilon}(q) - \hat{w}_{\epsilon}(q)}^{2}
		\\
		&\qquad + 2 \int \left(\sqrt{\abs{q}^{2} + m^{2}} - m\right)
		\abs{\hat{ w}_{\epsilon}(q)}^{2} \\
		&= 2(\normh{\varphi_{\epsilon} - w_{\epsilon}}^{2} -
		m\norml{\varphi_{\epsilon} - w_{\epsilon}}^{2}) +
		2(\normh{w_{\epsilon}}^{2} - m\norml{w_{\epsilon}}^{2}) \\
		&\leq \frac{\epsilon^{2}}{2m} (3 +
		\normh{\varphi_{\epsilon}})^{2} \norml{\nabla w_{1}}^{2}.
	\end{align*}
	We have that
	\begin{align*}
		Q(\varphi_{\epsilon}) - &Q(w_{\epsilon}) =
		Q((\varphi_{\epsilon} - w_{\epsilon}) + w_{\epsilon})) -
		Q(w_{\epsilon})\\
		&= Q(\varphi_{\epsilon} - w_{\epsilon}) + 4\int
		\frac{\scalar{\varphi_{\epsilon} - w_{\epsilon}}{\beta
		w_{\epsilon}}(x) \scalar{w_{\epsilon}}{\beta
		w_{\epsilon}}(x)}{\abs{x-y}} \\
		&\quad + 3\int \frac{\scalar{\varphi_{\epsilon} -
		w_{\epsilon}}{\beta w_{\epsilon}}(x)
		\scalar{\varphi_{\epsilon} - w_{\epsilon}}{\beta
		w_{\epsilon}}(x)}{\abs{x-y}} \\
		&\quad + 3\int \frac{\scalar{\varphi_{\epsilon} -
		w_{\epsilon}}{\beta (\varphi_{\epsilon} - w_{\epsilon})}(x)
		\scalar{w_{\epsilon}}{\beta w_{\epsilon}}(x)}{\abs{x-y}} \\
		&\quad + 4\int \frac{\scalar{\varphi_{\epsilon} -
		w_{\epsilon}}{\beta (\varphi_{\epsilon} - w_{\epsilon})}(x)
		\scalar{\varphi_{\epsilon} - w_{\epsilon}}{\beta
		w_{\epsilon}}(x)}{\abs{x-y}}.
		\\
		&\geq  - 4\acc \norml{\varphi_{\epsilon} -
		w_{\epsilon}}\norml{(-\Delta)^{1/4}\abs{w_{\epsilon}}}^{2} 
		- 3\acc \norml{\varphi_{\epsilon} -
		w_{\epsilon}}^{2}\norml{(-\Delta)^{1/4}\abs{w_{\epsilon}}}^{2} \\
		&\qquad- 4 \acc \norml{\varphi_{\epsilon} -
		w_{\epsilon}} \normh{\varphi_{\epsilon} - w_{\epsilon}}^{2}.
	\end{align*}
	Since
	\begin{equation*}
		\norml{(-\Delta)^{1/4}\abs{w_{\epsilon}}}^{2} = \int \abs{p}
		\abs{\hat{w}_{\epsilon}}^{2} = \epsilon \int \abs{p} \abs{w_{1}}^{2}
	\end{equation*}
	we have
	\begin{equation*}
		Q(\varphi_{\epsilon}) \geq Q(w_{\epsilon}) - c\epsilon^{2}
		\geq \epsilon Q(w) - c\epsilon^{2}
	\end{equation*}
	we therefore deduce that
	\begin{equation*}
		\mathcal{E}_{s}(w_{\epsilon}) \leq \frac{m}{2} + \frac{2
		\epsilon^{2}}{m}(1 + 8 s \acc)\norml{\nabla w}^{2} -
		\epsilon \frac{s}{4}\int \frac{\scalar{w}{\beta w}(x)
		\scalar{w}{\beta w}(y)}{\abs{x - y}} + c\epsilon^{2}.
	\end{equation*}

	Since $Q(w) > 0$ we deduce that 
	\begin{equation*}
		\mathcal{E}_{s}(w_{\epsilon}) < \frac{m}{2}
	\end{equation*}
	for $\epsilon$ small enough and for all $s \in
	(0,\frac{1}{8\pi})$, and hence $e(s) < \frac{m}{2}$ for all $s \in
	(0,\frac{1}{8\pi})$.
\end{proof}

\begin{prop}
	\label{prop:stimee}
	For all $\theta > 1$ and $s \in (0,\frac{1}{8\pi})$ such that
	$\theta s \in (0,\frac{1}{8\pi})$ we have that
	\begin{equation*}
		\label{eq:stimee}
		e(\theta s) < e(s).
	\end{equation*}
\end{prop}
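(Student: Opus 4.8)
The plan is to exploit the fact that, at a \emph{fixed} spinor profile, the functional $I_s$ is strictly decreasing in $s$, and then to quantify this decrease along a minimizing sequence for $e(s)$. Throughout write
\begin{equation*}
	Q(\psi) = \int \frac{\scalar{\psi}{\beta\psi}(x)\scalar{\psi}{\beta\psi}(y)}{\abs{x-y}},
\end{equation*}
which is nonnegative by \eqref{eq:convoluzione_positiva} since $\scalar{\psi}{\beta\psi}$ is real; for $w\in\Sigma_{+}$ let $\eta_{s}(w)$ be the maximizer of $J_{s,w}$ furnished by Proposition \ref{prop:massimo} and set $\psi^{s}_{w} = a(\eta_{s}(w))\,w + \eta_{s}(w)$.

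First I would record the basic monotonicity inequality. For every $w\in\Sigma_{+}$, using $\mathcal{E}_{\theta s}(w) = I_{\theta s}(\psi^{\theta s}_{w})$ and $I_{\theta s} = I_{s} - \tfrac{(\theta-1)s}{4}Q$, and noting that $\psi^{\theta s}_{w}$ is an admissible competitor for the supremum defining $\mathcal{E}_{s}(w)$, one gets
\begin{equation}
	\label{eq:monotone}
	\mathcal{E}_{\theta s}(w) = I_{s}(\psi^{\theta s}_{w}) - \frac{(\theta-1)s}{4}Q(\psi^{\theta s}_{w}) \le \mathcal{E}_{s}(w) - \frac{(\theta-1)s}{4}Q(\psi^{\theta s}_{w}).
\end{equation}
Discarding the last (nonnegative) term already yields the soft bound $e(\theta s)\le e(s)$; the whole point is to keep $Q(\psi^{\theta s}_{w})$ bounded away from $0$ along a minimizing sequence so as to make the inequality strict.

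The key observation for strictness is that the interaction must be quantitatively active precisely because $e(s)<\tfrac{m}{2}$ (Lemma \ref{lem:stimAe}). Since $\Lambda_{+}\psi^{\theta s}_{w} = a(\eta_{\theta s}(w))\,w$ and $\Lambda_{-}\psi^{\theta s}_{w} = \eta_{\theta s}(w)$, the first estimate in \eqref{eq:stimeeta} gives $\tfrac{1}{2}a(\eta_{\theta s}(w))^{2}\normh{w}^{2} - \tfrac{1}{2}\normh{\eta_{\theta s}(w)}^{2} \ge \tfrac{m}{2}$, so the ``free part'' of the energy never drops below the threshold $\tfrac{m}{2}$:
\begin{equation}
	\label{eq:floor}
	\mathcal{E}_{\theta s}(w) \ge \frac{m}{2} - \frac{\theta s}{4}Q(\psi^{\theta s}_{w}).
\end{equation}
Now take a minimizing sequence $v_{n}\in\Sigma_{+}$, $\mathcal{E}_{s}(v_{n})\to e(s)$. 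Combining \eqref{eq:monotone} and \eqref{eq:floor} at $w=v_{n}$ and cancelling the common part in $Q$ forces the lower bound $\tfrac{s}{4}Q(\psi^{\theta s}_{v_{n}}) \ge \tfrac{m}{2}-\mathcal{E}_{s}(v_{n})$, whose right-hand side is strictly positive in the limit.

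Finally I would feed this lower bound back into \eqref{eq:monotone}. Using $e(\theta s)\le \mathcal{E}_{\theta s}(v_{n})$ together with $\tfrac{(\theta-1)s}{4}Q(\psi^{\theta s}_{v_{n}}) \ge (\theta-1)\bigl(\tfrac{m}{2}-\mathcal{E}_{s}(v_{n})\bigr)$,
\begin{equation*}
	e(\theta s) \le \mathcal{E}_{s}(v_{n}) - (\theta-1)\left(\frac{m}{2} - \mathcal{E}_{s}(v_{n})\right),
\end{equation*}
and letting $n\to\infty$ gives $e(\theta s) \le e(s) + (\theta-1)\bigl(e(s)-\tfrac{m}{2}\bigr) < e(s)$, since $\theta>1$ and $e(s)<\tfrac{m}{2}$. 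The main obstacle is exactly the non-degeneracy of $Q$ in the third step: a priori a minimizing sequence could spread the Coulomb interaction out so that $Q\to 0$, and it is the strict inequality $e(s)<\tfrac{m}{2}$ of Lemma \ref{lem:stimAe} (via the energy floor \eqref{eq:floor} coming from \eqref{eq:stimeeta}) that rules this out and upgrades the soft monotonicity $e(\theta s)\le e(s)$ to a strict one.
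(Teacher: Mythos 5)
Your proof is correct and is essentially the paper's own argument in a different algebraic dress: the paper combines the very same two ingredients---the energy floor $\frac{1}{2}\normh{a(\eta_{\theta s}(w))w}^{2}-\frac{1}{2}\normh{\eta_{\theta s}(w)}^{2}\ge\frac{m}{2}$ from the first estimate in \eqref{eq:stimeeta} and the competitor bound $J_{s,w}(\eta_{\theta s}(w))\le\mathcal{E}_{s}(w)$ coming from the maximality of $\eta_{s}(w)$---into the single inequality $\theta\bigl(\mathcal{E}_{\theta s}(w)-\frac{m}{2}\bigr)\le\theta^{2}\bigl(\mathcal{E}_{s}(w)-\frac{m}{2}\bigr)$, which after dividing by $\theta$ is exactly your pointwise estimate $\mathcal{E}_{\theta s}(w)\le\theta\,\mathcal{E}_{s}(w)-(\theta-1)\frac{m}{2}$ obtained by feeding the lower bound on the Coulomb term back into the monotonicity inequality. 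Passing to the infimum over $w$ (your minimizing sequence) and invoking $e(s)<\frac{m}{2}$ from Lemma \ref{lem:stimAe} then concludes both arguments identically.
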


\begin{proof}
	Let $\theta > 1$ and $s \in (0,\frac{1}{8\pi})$ such that $\theta
	s \in (0,\frac{1}{8\pi})$. Take $w \in \Sigma_{+}$ and let
	$\eta_{s}(w) \in B_{1}$ the function whose existence follows from
	proposition \ref{prop:massimo}. Since it follows from
	\eqref{eq:stimeeta} that
	\begin{equation*}
		\normh{a(\eta_{\theta s}(w)) w}^{2} - \normh{\eta_{\theta
		s}(w)}^{2} - m \geq 0
	\end{equation*}
	we have that
	\begin{align*}
		\theta&(\mathcal{E}_{\theta s}(w) - \frac{m}{2}) \\
		&= \theta\left(\frac{1}{2}\normh{a(\eta_{\theta s}(w)) w}^{2} -
		\frac{1}{2}\normh{\eta_{\theta s}(w)}^{2} - \frac{m}{2} -
		\frac{\theta s}{4} \int \frac{\scalar{\psi_{1}}{\beta
		}(x) \scalar{}{\beta }(y)}{\abs{x -
		y}} \right) \\
		&\leq \theta^{2}\left(\frac{1}{2}\normh{a(\eta_{\theta s}(w))
		w}^{2} - \frac{1}{2}\normh{\eta_{\theta s}(w)}^{2} -
		\frac{m}{2} - \frac{s}{4} \int \frac{\scalar{\psi_{1}}{\beta
		\psi_{1}}(x) \scalar{\psi_{1}}{\beta \psi_{1}}(y)}{\abs{x -
		y}} \right) \\
		&\leq \theta^{2}\left(\frac{1}{2}\normh{a(\eta_{s}(w))
		w}^{2} - \frac{1}{2}\normh{\eta_{s}(w)}^{2} -
		\frac{m}{2} - \frac{s}{4} \int \frac{\scalar{\psi_{2}}{\beta
		\psi_{2}}(x) \scalar{\psi_{2}}{\beta \psi_{2}}(y)}{\abs{x -
		y}} \right) \\
		&= \theta^{2}(\mathcal{E}_{s}(w) - \frac{m}{2})
	\end{align*}
	(here $\psi_{1} = a(\eta_{\theta s}(w)) w + \eta_{\theta s}(w))$
	and $\psi_{2} = a(\eta_{s}(w)) w + \eta_{s}(w)$.) We know that
	$e(s) < \frac{m}{2}$ and hence
	\begin{equation*}
		\theta(e(\theta s) - \frac{m}{2}) \leq \theta^{2}(e(s) -
		\frac{m}{2}) < \theta(e(s) - \frac{m}{2})
	\end{equation*}
	from which we deduce that $e(\theta s) < e(s)$.
\end{proof}

\begin{proof}[Proof of theorem \ref{thm:main}]

By Ekeland's variational principle, there exists a sequence $w_{n} \in
\Sigma_{+}$ such that 
\begin{equation*}
	\mathcal{E}_{s}(w_{n}) \to e(s), \qquad \sup_{v \in \Sigma_{+}}
	\labs{d\mathcal{E}_{s}(w_{n})[v]} \to 0.
\end{equation*}
From $\mathcal{E}_{s}(w_{n}) \to e(s)$ we deduce from lemma
\ref{lem:stime_max} that $\normh{w_{n}} \leq \frac{4e(s)}{2 -
s\acc} + o(1)$ so that the sequence $w_{n}$ is bounded. Follows
from \ref{prop:massimo} that also $\eta_{n} = \eta(w_{n})$ and
$\psi_{n} = a(\eta_{n}) w_{n} + \eta_{n}$ are bounded in $X$. Letting
$\omega_{n} = a(\eta_{n})^{-1}dI(\psi_{n})[w_{n}]$ we have that
\begin{equation*}
	dI_{s}(\psi_{n})[h] - \omega_{n} \scalarl{\psi_{n}}{h} = 0 \qquad
	\text{for all } h \in X.
\end{equation*}

We can assume that (up to a subsequence) $\psi_{n} \tow \psi$ in $X$
and that $\omega_{n} \to \omega$. Then we have that for all $h \in X$
\begin{multline*}
	dI_{s}(\psi_{n})[h] - \omega_{n} \scalarl{\psi_{n}}{h} \\
	= \scalarh{\psi_{n}}{\Lambda_{+} h} -
	\scalarh{\psi_{n}}{\Lambda_{-} h} - s \int
	\frac{\scalar{\psi_{n}}{\beta \psi_{n}}(x) \scalar{\psi_{n}}{\beta
	h}(y)}{\abs{x - y}} - \omega_{n} \scalarl{\psi_{n}}{h} \\
	\to 0
\end{multline*}
since, by \eqref{eq:convoluzione_tende_0}, we have that
\begin{equation*}
	\int \frac{\scalar{\psi_{n}}{\beta \psi_{n}}(x) \scalar{\psi_{n} -
	\psi}{\beta h}(y)}{\abs{x - y}} \to 0.
\end{equation*}
As a consequence we have that
\begin{equation*}
	dI_{s}(\psi)[h] - \omega \scalarl{\psi}{h} = 0 \qquad \text{for
	all } h \in X.
\end{equation*}
The weak convergence does not, however, preserve the $L^{2}$ norm, so 
we only know that $\norml{\psi} \leq \norml{\psi_{n}} = 1$ (it could 
even be that $\psi = 0$). 

To conclude we will now apply the concentration-compactness principle,
see \cite{Lions_1984, Lions_1984-1}. First of all let us show that no
vanishining occurs. By contradiction, assume that
\begin{equation*}
	\limsup_{n \to +\infty} \sup_{y \in \mathbb{R}^{3}} \int_{B(y,1)} 
	\abs{\psi_{n}}^{2} = 0.
\end{equation*}
Then we know, see \cite{Lions_1984} or \cite[Lemma 1.21]{Willem96},
that $\psi_{n} \to 0$ in $L^{p}(\mathbb{R}^{3})$ for $2 < p < 6$.

Since 
\begin{equation*}
	\int \frac{\scalar{\psi_{n}}{\beta \psi_{n}(x)}
	\scalar{\psi_{n}}{\beta \psi_{n}(y)}}{\abs{x - y}} \leq \int
	\frac{\abs{\psi_{n}(x)}^{2} \abs{\psi_{n}(y)}^{2}}{\abs{x - y}}
	\leq C\norml[\frac{12}{5}]{\psi_{n}}^{4} \to 0
\end{equation*}
we deduce, using \eqref{eq:stimeeta}, \eqref{eq:stimeMoltiplicatore}
and lemma \ref{lem:stimAe} that
\begin{align*}
	0 &= dI_{s}(\psi_{n})[\psi_{n}] - \omega_{n} \norml{\psi_{n}}^{2} \\
	&= \normh{a(\eta_{n})w_{n}}^{2} - \normh{\eta_{n}}^{2} - \omega_{n}
	\norml{\psi_{n}}^{2} - s \int \frac{\scalar{\psi_{n}}{\beta
	\psi_{n}(x)} \scalar{\psi_{n}}{\beta \psi_{n}(y)}}{\abs{x - y}} \\
	&= \normh{a(\eta_{n})w_{n}}^{2} - \normh{\eta_{n}}^{2} -
	m\norml{\psi_{n}}^{2} + (m - \omega_{n}) + o(1) \\
	&\geq (m - \omega_{n}) + o(1) > 0
\end{align*}
for $n$ large enough, a contradiction which shows that vanishing does 
not occur.

Then we know from the concentration compactness principle, that there
exists $p \geq 1$ functions $\phi_{1}$, \ldots, $\phi_{p} \in X$,
critical points for $I_{s}$ under the constraint $\norml{\psi}^{2} =
\mu_{i}$ (hence satisfying \eqref{eq:equazionemoltiplicatore} with
$\omega = \lim_{n}\omega_{n} > 0$) and $p$ sequences of points $x_{i,
n} \in \mathbb{R}^{3}$, $i = 1$, \ldots, $p$ such that $\abs{x_{i,n} -
x_{j,n}} \to +\infty$ for all $i \neq j$ as $n \to +\infty$ and
\begin{equation*}
	\normh{\psi_{n} - \sum_{i = i}^{^{p}} \phi_{i}(\cdot - x_{i,n})} 
	\to 0 \quad \text{as } n \to +\infty.
\end{equation*}
From this follows also that $\norml{\psi_{n}}^{2} = 1 = \sum_{i =
1}^{p} \mu_{i}$.

We then observe that
\begin{align*}
	\normh{\Lambda_{+}\psi_{n}}^{2} - &\normh{\Lambda_{-}\psi_{n}}^{2}
	= \scalarh{\psi_{n}}{\Lambda_{+}\psi_{n} - \Lambda_{-}\psi_{n}}
	\\
	&=\scalarh{\psi_{n} - \sum_{i = i}^{^{p}} \phi_{i}(\cdot -
	x_{i,n})}{\Lambda_{+}\psi_{n} - \Lambda_{-}\psi_{n}} \\
	&\qquad + \sum_{i = i}^{^{p}} \scalarh{\phi_{i}(\cdot -
	x_{i,n})}{\Lambda_{+}\psi_{n} - \Lambda_{-}\psi_{n}} \\
	&= \sum_{i = i}^{^{p}} \left(\scalarh{\Lambda_{+}\phi_{i}(\cdot -
	x_{i,n})}{\psi_{n}} - \scalarh{\Lambda_{-}\phi_{i}(\cdot -
	x_{i,n})}{\psi_{n}}\right) + o(1) \\
	&= \sum_{i = i}^{^{p}} \left(\normh{\Lambda_{+}\phi_{i}}^{2} -
	\normh{\Lambda_{-}\phi_{i}}^{2} \right) + o(1)
\end{align*}
and also
\begin{equation*}
	\int \frac{\scalar{\psi_{n}}{\beta \psi_{n}(x)}
	\scalar{\psi_{n}}{\beta \psi_{n}(y)}}{\abs{x - y}} =
	\sum_{i=1}^{p} \int \frac{\scalar{\phi_{i}}{\beta \phi_{i}(x)}
	\scalar{\phi_{i}}{\beta \phi_{i}(y)}}{\abs{x - y}} + o(1).
\end{equation*}
Finally we have that
\begin{equation}
	\label{eq:Isispezza}
	e(s) = I_{s}(\psi_{n})+ o(1) = \sum_{i=1}^{p} I_{s}(\phi_{i}) +
	o(1)
\end{equation}

Let, for $i = 1, \ldots,n$, $\tilde{\psi}_{i} = \norml{\phi_{i}}^{-1}
\phi_{i} = \mu_{i}^{-1/2} \phi_{i} \in \Sigma$. We have that
\begin{equation*}
	I_{s}(\phi_{i}) = I_{s}(\sqrt{\mu_{i}} \tilde{\psi}_{i}) = 
	\mu_{i} I_{s\mu_{i}}(\tilde{\psi}_{i}) 
\end{equation*}
and
\begin{equation*}
	0 = dI_{s}(\phi_{i})[h] - \omega \scalarl{\phi_{i}}{h} =
	\sqrt{\mu_{i}} \left(dI_{s\mu_{i}}(\tilde{\psi}_{i})[h] - \omega
	\scalarl{\tilde{\psi}_{i}}{h}\right) \qquad \text{for all } h \in 
	X,
\end{equation*}
Follows from proposition \ref{prop:punticriticiE} that $\tilde{w}_{i}
= \norml{\Lambda_{+}\tilde{\psi}_{i}}^{-1} \Lambda_{+}\tilde{\psi}_{i}
\in \Sigma_{+}$ is a critical point for $\mathcal{E}_{s\mu_{i}}$ and
$\mathcal{E}_{s\mu_{i}}(\tilde{w}_{i}) =
I_{s\mu_{i}}(\tilde{\psi}_{i})$.

Since
\begin{equation*}
	\mathcal{E}_{s\mu_{i}}(\tilde{w}_{i}) \geq e(s\mu_{i}) 
\end{equation*}
we deduce from \eqref{eq:stimee} that
\begin{equation*}
	e(s) = \sum_{i=1}^{p} I_{s}(\phi_{i}) = \sum_{i=1}^{p} \mu_{i}
	I_{s\mu_{i}}(\tilde{\psi}_{i}) \geq \sum_{i=1}^{p} \mu_{i}
	e(s\mu_{i}) > \sum_{i=1}^{p} \mu_{i}
	e(\frac{1}{\mu_{i}}s\mu_{i}) = e(s)\sum_{i = 1}^{p} \mu_{i},
\end{equation*}
a contradiction if $p > 1$.
	
Since there is no vanishing or dichotomy, our sequence $\psi_{n}$
converges strongly in $X$ to a critical point $\psi \in X$ of
\eqref{eq:equazionemoltiplicatore} such that $\norml{\psi} = 1$ and
the theorem follows.

\end{proof}

\appendix

\section{A useful lemma}

This lemma is similar to \cite[Lemma 2.9]{Nolasco_2021}. We give here
a slightly different proof.
\begin{lem}
	\label{lem:stimaMalefica}
	For all $\psi = \sqrt{1 - \norml{w}^{2}}w + \eta$, $w \in 
	\Sigma_{+}$, $\eta \in X_{-}$ we have
	\begin{align*}
		\int \frac{\scalar{\psi}{\beta\psi}(x)
		\scalar{\psi}{\beta\psi}(y)}{\abs{x-y}} &\geq \int
		\frac{\scalar{w}{\beta w}(x) \scalar{w}{\beta
		w}(y)}{\abs{x-y}}- 2 \acc \norml{\eta}^{2}
		\normh{w}^{2} \\
		&\qquad -14 \aeta^{2} \acc(\normh{w}^{2} -
		m\norml{w}^{2}) - 18 \acc\normh{\eta}^{2}
	\end{align*}
\end{lem}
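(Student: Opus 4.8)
The plan is to substitute $\psi=\aeta w+\eta$ (with $\aeta=\sqrt{1-\norml{\eta}^{2}}$, as in the definition of $\J$), expand the quartic Coulomb term into the quadratic form
\[
	D(\rho,\sigma)=\int\frac{\rho(x)\sigma(y)}{\abs{x-y}},
\]
which is \emph{positive} by \eqref{eq:convoluzione_positiva}, and then isolate a positive part reproducing the leading contribution $D(\rho_{w},\rho_{w})$, with $\rho_{w}=\scalar{w}{\beta w}$, while charging all sign-indefinite contributions to the three error terms. Writing $\rho_{\psi}=\scalar{\psi}{\beta\psi}=\aeta^{2}\rho_{w}+2\aeta\,b+\rho_{\eta}$, where $b=\Re\scalar{w}{\beta\eta}$ and $\rho_{\eta}=\scalar{\eta}{\beta\eta}$, the energy $D(\rho_{\psi},\rho_{\psi})$ splits into six terms, three of them diagonal and non‑negative.

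For the leading diagonal term I would use $\aeta^{4}=(1-\norml{\eta}^{2})^{2}\geq 1-2\norml{\eta}^{2}$ together with $D(\rho_{w},\rho_{w})\leq\acc\normh{w}^{2}$ — which follows from $\abs{\rho_{w}}\leq\abs{w}^{2}$, the positivity of the kernel, and \eqref{eq:stima_convoluzione} applied with $\rho=\abs{w}^{2}$, $\psi=w$ — to obtain
\[
	\aeta^{4}D(\rho_{w},\rho_{w})\geq D(\rho_{w},\rho_{w})-2\acc\norml{\eta}^{2}\normh{w}^{2},
\]
which already produces the first error term. The remaining two diagonal terms, $4\aeta^{2}D(b,b)$ and $D(\rho_{\eta},\rho_{\eta})$, are non‑negative and will be \emph{retained}, not discarded. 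The genuinely harmless cross term is $2\aeta^{2}D(\rho_{w},\rho_{\eta})$: by \eqref{eq:stima_convoluzione} with $\rho=\abs{w}^{2}$, $\psi=\eta$ one has $\abs{D(\rho_{w},\rho_{\eta})}\leq\acc\normh{\eta}^{2}$, contributing to the $18\acc\normh{\eta}^{2}$ slot.

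The crux is the term containing $b=\Re\scalar{w}{\beta\eta}$. Here $\beta$ does \emph{not} preserve the spectral subspaces $X_{\pm}$, and the naive pointwise bound $\abs{b}\leq\abs{w}\abs{\eta}$ is too lossy: through Cauchy--Schwarz for the positive form $D$ it only yields estimates of type $\acc\normh{w}^{2}\norml{\eta}$ or $\acc\normh{w}\normh{\eta}$, each of which leaves a spurious bare $\normh{w}^{2}$ that the statement forbids. The remedy is to exploit the quantitative smallness of the ``wrong'' spinor components. Writing $w=(w^{\mathrm u},w^{\ell})$ and $\eta=(\eta^{\mathrm u},\eta^{\ell})$, the explicit form of $\hat\Lambda_{\pm}$ and of $u_{\pm}(p)$ gives, for $w\in X_{+}$ and $\eta\in X_{-}$,
\[
	\norml{(-\Delta)^{1/4}w^{\ell}}^{2}\leq\tfrac12(\normh{w}^{2}-m\norml{w}^{2}),\qquad
	\norml{(-\Delta)^{1/4}\eta^{\mathrm u}}^{2}\leq\tfrac12(\normh{\eta}^{2}-m\norml{\eta}^{2}),
\]
together with the analogous $L^{2}$ bounds $\norml{w^{\ell}}^{2}\leq\frac{1}{2m}(\normh{w}^{2}-m\norml{w}^{2})$ and $\norml{\eta^{\mathrm u}}^{2}\leq\frac{1}{2m}(\normh{\eta}^{2}-m\norml{\eta}^{2})$. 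Splitting $b=\Re\scalar{w^{\mathrm u}}{\eta^{\mathrm u}}-\Re\scalar{w^{\ell}}{\eta^{\ell}}$, in each half exactly one factor is a small component, and I would estimate the cross terms using Cauchy--Schwarz for the positive form $D$ and the symmetrisation $\int\frac{\abs{f}\abs{g}(x)\abs{f}\abs{g}(y)}{\abs{x-y}}\leq\int\frac{\abs{f}^{2}(x)\abs{g}^{2}(y)}{\abs{x-y}}$, followed by \eqref{eq:stima_convoluzione}; the small factor then supplies precisely a factor $\normh{w}^{2}-m\norml{w}^{2}$ or $\normh{\eta}^{2}-m\norml{\eta}^{2}\leq\normh{\eta}^{2}$.

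The main obstacle is exactly this last balancing. The contribution in which the only smallness sits in $\eta^{\mathrm u}$ cannot be bounded in isolation — estimating it alone reintroduces a bare $\normh{w}^{2}$ — so it must be balanced against the \emph{retained} positive diagonal terms $4\aeta^{2}D(b,b)$ and $D(\rho_{\eta},\rho_{\eta})$, i.e.\ one completes squares (or applies Young's inequality to the bilinear form $D$) so that the ``large'' factors are absorbed by the positive diagonal and only the kinetic excesses survive. Once these component estimates and the completion of squares are set up, what remains is the careful (but routine) bookkeeping of the six terms that pins down the numerical constants $2$, $14$ and $18$.
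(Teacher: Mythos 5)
Your framework is sound up to the crux: the six-term expansion, the positivity \eqref{eq:convoluzione_positiva}, the handling of the leading term via $\aeta^{4}\geq 1-2\norml{\eta}^{2}$ and $D(\rho_{w},\rho_{w})\leq\acc\normh{w}^{2}$, and the component bounds $\norml{(-\Delta)^{1/4}w^{\ell}}^{2}\leq\frac{1}{2}(\normh{w}^{2}-m\norml{w}^{2})$, $\norml{\eta^{\mathrm u}}^{2}\leq\frac{1}{2m}(\normh{\eta}^{2}-m\norml{\eta}^{2})$ are all correct, and are indeed the same smallness that the paper extracts from its explicit computation of $4\hat{\Lambda}_{+}(p-q)\beta\hat{\Lambda}_{-}(q)$. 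You also correctly diagnose the dangerous block, namely $D(\rho_{w},\beta_{1})$ with $\beta_{1}=\Re\scalar{w^{\mathrm u}}{\eta^{\mathrm u}}$, where the only smallness sits in $\eta^{\mathrm u}$. But your proposed remedy for it --- absorbing it into the retained diagonal $4\aeta^{2}D(b,b)+D(\rho_{\eta},\rho_{\eta})$ by completing squares --- cannot work, and this is a genuine gap. Completing the square against a portion $4\theta\aeta^{2}D(\beta_{1},\beta_{1})$, $\theta\in(0,1]$, of the retained diagonal gives
\begin{equation*}
	4\aeta^{3}D(\rho_{w},\beta_{1}) \geq -4\theta\aeta^{2}D(\beta_{1},\beta_{1}) - \theta^{-1}\aeta^{4}D(\rho_{w},\rho_{w}),
\end{equation*}
and the price $\theta^{-1}\aeta^{4}D(\rho_{w},\rho_{w})$ is, for $\theta=1$, exactly the leading term: you end up proving the trivial bound $\int\frac{\scalar{\psi}{\beta\psi}(x)\scalar{\psi}{\beta\psi}(y)}{\abs{x-y}}\geq(\text{errors})$ with the main term $\int\frac{\scalar{w}{\beta w}(x)\scalar{w}{\beta w}(y)}{\abs{x-y}}$ gone from the right-hand side; for $\theta<1$ the loss is a constant multiple of $D(\rho_{w},\rho_{w})$, which is of size up to $\acc\normh{w}^{2}$ --- precisely the bare $\normh{w}^{2}$ the statement forbids. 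The retained terms $D(b,b)$, $D(\rho_{\eta},\rho_{\eta})$ are merely nonnegative; they carry no lower bound involving $w$, so they cannot pay for such a loss. Note that keeping the full $w$-term on the right is not cosmetic: in Lemma \ref{lem:stimAe} it is this term, of order $\epsilon$, that must beat the $O(\epsilon^{2})$ errors.

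What is missing is a \emph{transfer mechanism} ensuring that one-sided $\eta$-smallness is never paired with the bulk of $w$. Your triangle-inequality split $b=\Re\scalar{w^{\mathrm u}}{\eta^{\mathrm u}}-\Re\scalar{w^{\ell}}{\eta^{\ell}}$ discards the interference between the two halves; the paper instead keeps the full matrix kernel $\hat{\Lambda}_{+}(p-q)\beta\hat{\Lambda}_{-}(q)$ and uses two devices your proposal lacks: (i) the algebraic inequality
\begin{equation*}
	\labs{\frac{q}{\lambda(q)} + \frac{p-q}{\lambda(p-q)}} \leq \frac{m\abs{p}}{\lambda(q)\lambda(p-q)} + \frac{2\abs{p-q}}{\lambda(p-q)},
\end{equation*}
which converts the dangerous $\eta$-sided factor $\frac{\abs{q}}{\lambda(q)}$ into a $w$-sided factor plus a term carrying the \emph{total} momentum $\abs{p}$; and (ii) the observation that a factor $\abs{p}$ can be cancelled against the Coulomb multiplier $\abs{p}^{-2}$, after which the residual $\abs{p}^{-1}$ (i.e.\ $\abs{x}^{-2}$) is redistributed by Hardy--Kato inequalities as half-derivatives on \emph{both} factors, producing the mixed products $\sqrt{\normh{w}^{2}-m\norml{w}^{2}}\,\sqrt{\normh{\eta}^{2}-m\norml{\eta}^{2}}$ that Young's inequality then turns into the admissible terms $14\aeta^{2}\acc(\normh{w}^{2}-m\norml{w}^{2})+18\acc\normh{\eta}^{2}$. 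Without (i)--(ii) or an equivalent device, the $w^{\mathrm u}$--$\eta^{\mathrm u}$ block yields only bounds of the type $\acc\normh{w}(\normh{\eta}^{2}-m\norml{\eta}^{2})^{1/2}$, and no Young inequality or square-completion converts these into the allowed error terms; the bookkeeping of the constants $2$, $14$, $18$ is therefore moot, since the proof breaks at exactly the step you flagged as the main obstacle.
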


\begin{proof}
We have 
\begin{align*}
	&\int \frac{\scalar{\psi}{\beta\psi}(x)
	\scalar{\psi}{\beta\psi}(y)}{\abs{x-y}} = \aeta^{4} \int
	\frac{\scalar{w}{\beta w}(x) \scalar{w}{\beta w}(y)}{\abs{x-y}} \\
	&\qquad + 4\aeta^{3} \int \frac{\scalar{w}{\beta w}(x)
	\scalar{w}{\beta \eta}(y)}{\abs{x-y}} + 3\aeta^{2} \int
	\frac{\scalar{w}{\beta w}(x) \scalar{\eta}{\beta
	\eta}(y)}{\abs{x-y}} \\
	&\qquad + 3\aeta^{2} \int \frac{\scalar{w}{\beta \eta}(x)
	\scalar{w}{\beta \eta}(y)}{\abs{x-y}} + 4\aeta \int
	\frac{\scalar{\eta}{\beta \eta}(x) \scalar{w}{\beta
	\eta}(y)}{\abs{x-y}} \\
	&\qquad + \int \frac{\scalar{\eta}{\beta \eta}(x)
	\scalar{\eta}{\beta \eta}(y)}{\abs{x-y}}\\
	&\geq \aeta^{4} \int \frac{\scalar{w}{\beta w}(x) \scalar{w}{\beta
	w}(y)}{\abs{x-y}} + 4\aeta^{3} \int \frac{\scalar{w}{\beta w}(x)
	\scalar{w}{\beta \eta}(y)}{\abs{x-y}} \\
	&\qquad - 3\aeta^{2}\acc \normh{\eta}^{2} + 4\aeta \int
	\frac{\scalar{\eta}{\beta \eta}(x) \scalar{w}{\beta
	\eta}(y)}{\abs{x-y}} 
\end{align*}

We have
\begin{align*}
	\labs{\int \frac{\scalar{w}{\beta w}(x) \scalar{w}{\beta
	\eta}(y)}{\abs{x-y}} \, dx \, dt } &= (2\pi)^{3/2}
	\sqrt{\frac{2}{\pi}} \labs{ \int
	\frac{\mathcal{F}[\scalar{w}{\beta w}]
	\mathcal{F}[\scalar{w}{\beta \eta}]}{\abs{p}^{2}} \, dp} \\
	&\leq (2\pi)^{3/2} \sqrt{\frac{2}{\pi}}
	\abs{\mathcal{F}[\scalar{w}{\beta w}]}_{\infty} \labs{ \int
	\frac{ \abs{\mathcal{F}[\scalar{w}{\beta \eta}]}}{\abs{p}^{2}} \,
	dp} \\
	&\leq \sqrt{\frac{2}{\pi}} \abs{\scalar{w}{\beta w}}_{1} \labs{
	\int \frac{ \abs{\mathcal{F}[\scalar{w}{\beta
	\eta}]}}{\abs{p}^{2}} \, dp} \\
	&\leq \sqrt{\frac{2}{\pi}} \norml{w}^{2} \labs{ \int \frac{
	\abs{\mathcal{F}[\scalar{w}{\beta \eta}]}}{\abs{p}^{2}} \, dp} \\
	&\leq \sqrt{\frac{2}{\pi}} \int \frac{dp}{\abs{p}^{2}} \left( 
	\frac{1}{(2\pi)^{3/2}} \int \abs{ 
	\scalar{\hat{w}(p-q)}{\beta\hat{\eta}(q)}} \, dq \right)
\end{align*}
and
\begin{multline*}
	\labs{\int \frac{\scalar{\eta}{\beta \eta}(x) \scalar{w}{\beta
	\eta}(y)}{\abs{x-y}} \, dx \, dt } \\
	\leq \norml{\eta}^{2} \sqrt{\frac{2}{\pi}} \int
	\frac{dp}{\abs{p}^{2}} \left( \frac{1}{(2\pi)^{3/2}} \int \abs{
	\scalar{\hat{w}(p-q)}{\beta\hat{\eta}(q)}} \, dq \right)
\end{multline*}
so that
\begin{multline*}
	4\aeta^{3} \int \frac{\scalar{w}{\beta w}(x)
	\scalar{w}{\beta \eta}(y)}{\abs{x-y}}  + 4\aeta \int
	\frac{\scalar{\eta}{\beta \eta}(x) \scalar{w}{\beta
	\eta}(y)}{\abs{x-y}} \\
	\leq 4\aeta \sqrt{\frac{2}{\pi}} \int \frac{dp}{\abs{p}^{2}}
	\left( \frac{1}{(2\pi)^{3/2}} \int \abs{
	\scalar{\hat{w}(p-q)}{\beta\hat{\eta}(q)}} \, dq \right)
\end{multline*}

Since 
\begin{align*}
	\scalar{\hat{w}(p-q)}{\beta\hat{\eta}(q)} &=
	\scalar{\hat{\Lambda}_{+}(p-q) \hat{w}(p-q)}{\beta
	\hat{\Lambda}_{-}(q) \hat{\eta}(q)} \\
	&= \scalar{\hat{w}(p-q)}{\hat{\Lambda}_{+}(p-q) \beta
	\hat{\Lambda}_{-}(q) \hat{\eta}(q)}
\end{align*}
we compute
\begin{align*}
	4\hat{\Lambda}_{+}(p-q) &\beta \hat{\Lambda}_{-}(q) \\
	&=\left(\matr{I} + \frac{m\beta}{\lambda(p-q)} + \frac{\malf \cdot
	(p-q)}{\lambda(p-q)} \right) \beta \left(\matr{I} -
	\frac{m\beta}{\lambda(q)} - \frac{\malf \cdot q}{\lambda(q)} \right) \\
	&= \beta \left( 1 - \frac{m^{2}}{\lambda(q)\lambda(p-q)}\right) -
	\matr{I} \left(\frac{m}{\lambda(q)} -
	\frac{m}{\lambda(p-q)}\right) \\
	&\qquad - \beta \malf \cdot \left(\frac{q}{\lambda(q)} +
	\frac{p-q}{\lambda(p-q)} \right) - \frac{m \malf \cdot (q +
	(p-q))}{\lambda(q) \lambda(p-q)} \\
	&\qquad + \frac{\beta}{\lambda(q) \lambda(p-q)} (\malf \cdot (p-q)
	\malf \cdot q ) \\
	&= \beta \left( 1 - \frac{m^{2}}{\lambda(q)\lambda(p-q)}\right) -
	\matr{I} \left(\frac{m}{\lambda(q)} -
	\frac{m}{\lambda(p-q)}\right) \\
	&\qquad - \beta \malf \cdot \left(\frac{q}{\lambda(q)} +
	\frac{p-q}{\lambda(p-q)} \right) - \frac{m \malf \cdot
	p}{\lambda(q) \lambda(p-q)} 
	+ \frac{\beta \msig \cdot (p-q) \msig\cdot q}{\lambda(q)
	\lambda(p-q)} 
\end{align*}
where 
\begin{equation*}
	\Sigma_{i} = 
	\begin{pmatrix}
		\sigma_{i} & 0 \\
		0 & \sigma_{i}
	\end{pmatrix}
\end{equation*}
We now estimate the different terms. First of all, from 
\begin{equation*}
	m(\abs{p-q} + \abs{q}) \leq \lambda(q) \lambda(p-q) \leq
	\abs{q}\abs{p-q} + m(\abs{q} + \abs{p-q}) + m^{2},
\end{equation*}
and
\begin{equation*}
	\labs{\abs{q}\lambda(p-q) - \abs{p-q}\lambda(q)} \leq m\abs{p}
\end{equation*}
we deduce that
\begin{equation*}
	\labs{\frac{\abs{q}}{\lambda(q)} - \frac{\abs{p-q}}{\lambda(p-q)}}
	= \labs{\frac{\abs{q}\lambda(p-q)
	-\abs{p-q}\lambda(q)}{\lambda(q)\lambda(p-q)}} \leq
	\frac{m\abs{p}}{\lambda(q)\lambda(p-q)}
\end{equation*}
and
\begin{align*}
	\labs{\frac{m}{\lambda(q)} - \frac{m}{\lambda(p-q)}} &= m
	\frac{\abs{\lambda(p-q) - \lambda(q)}}{\lambda(q) \lambda(p-q)} =
	m\frac{\abs{m^{2} + \abs{p-q}^{2} - m^{2} -
	\abs{q}^{2}}}{\lambda(q) \lambda(p-q)(\lambda(q) + \lambda(p
	-q))}\\
	&\leq \frac{\abs{\abs{p-q} - \abs{q}}}{\lambda(q) + \lambda(p -q)}
	\leq \frac{\abs{p}}{\lambda(q) + \lambda(p -q)} \\
	&\leq \frac{\abs{p}}{(\lambda(q) + m)^{1/2}(\lambda(p -q) + 
	m)^{1/2}}.
\end{align*}
Then
\begin{align*}
	\labs{\frac{\lambda(q)\lambda(p-q) -
	m^{2}}{\lambda(q)\lambda(p-q)}} &\leq \frac{\abs{q}\abs{p-q} +
	m(\abs{q} + \abs{p-q})}{\lambda(q)\lambda(p-q)} \\
	&\leq \frac{\abs{q}\abs{p-q}}{\lambda(q)\lambda(p-q)} +
	\frac{m\abs{p}}{\lambda(q)\lambda(p-q)} +
	2\frac{m\abs{p-q}}{\lambda(q)\lambda(p-q)}
\end{align*}
and
\begin{align*}
	\labs{\frac{q}{\lambda(q)} + \frac{p-q}{\lambda(p-q)}} &\leq
	\labs{\frac{\abs{q}}{\lambda(q)} - \frac{\abs{p-q}}{\lambda(p-q)}}
	+ 2 \frac{\abs{p-q}}{\lambda(p-q)} \\
	&\leq \frac{m\abs{p}}{\lambda(q)\lambda(p-q)} + 2
	\frac{\abs{p-q}}{\lambda(p-q)}.
\end{align*}
Since 
\begin{equation*}
	\labs{\frac{\beta \msig \cdot (p-q) \msig\cdot q}{\lambda(q)
	\lambda(p-q)}} \leq \frac{\abs{q} \abs{p-q}}{\lambda(q)
	\lambda(p-q)}
\end{equation*}
we finally have 
\begin{multline*}
	4\abs{\scalar{\hat{w}(p-q)}{\beta \hat{\eta}(q)}} \\
	\leq \Bigl( \frac{3 \abs{q} \abs{p-q} + 3 m\abs{p} + 2 m \abs{p-q}
	}{\lambda(q)\lambda(p-q)} + \frac{2\abs{p-q}}{\lambda(p-q)} \Bigr)
	\abs{\hat{w}(p-q)} \abs{\hat{\eta}(q)} \\+
	\frac{\abs{p}}{(\lambda(q) + m)^{1/2}(\lambda(p -q) + m)^{1/2}}
	\abs{\hat{w}(p-q)} \abs{\hat{\eta}(q)}
\end{multline*}
Let us analyse the different terms:
\begin{align*}
	\sqrt{\frac{2}{\pi}} \int \frac{dp}{\abs{p}^{2}} &\left(
	\frac{1}{(2\pi)^{3/2}} \int \frac{\abs{p-q}
	\abs{\hat{w}(p-q)}}{\lambda(p-q)} \frac{\abs{q}
	\abs{\hat{\eta}(q)}}{\lambda(q)} \, dq \right) \\
	&= \sqrt{\frac{2}{\pi}} \int \frac{1}{\abs{p}^{2}} \mathcal{F}
	\left[ \mathcal{F}^{-1} \left[\frac{\abs{p}
	\abs{\hat{w}(p)}}{\lambda(p)} \right] \mathcal{F}^{-1} \left[
	\frac{\abs{p} \abs{\hat{\eta}(p)}}{\lambda(p)} \right] \right] \, dp
	\\
	&= \int \frac{1}{\abs{x}} \mathcal{F}^{-1} \left[\frac{\abs{p}
	\abs{\hat{w}(p)}}{\lambda(p)} \right] \mathcal{F}^{-1} \left[
	\frac{\abs{p} \abs{\hat{\eta}(p)}}{\lambda(p)} \right] \, dx \\
	&\leq \norml{\frac{1}{\abs{x}^{1/2}} \mathcal{F}^{-1}
	\left[\frac{\abs{p} \abs{\hat{w}(p)}}{\lambda(p)} \right]}
	\norml{\frac{1}{\abs{x}^{1/2}} \mathcal{F}^{-1}
	\left[\frac{\abs{p} \abs{\hat{\eta}(p)}}{\lambda(p)} \right]} \\
	&\leq \acc \norml{(-\Delta)^{1/4} \mathcal{F}^{-1}
	\left[\frac{\abs{p} \abs{\hat{w}(p)}}{\lambda(p)} \right]}
	\norml{(-\Delta)^{1/4} \mathcal{F}^{-1} \left[\frac{\abs{p}
	\abs{\hat{\eta}(p)}}{\lambda(p)} \right]} \\
	&\leq 2 \acc \sqrt{\normh{w}^{2} - m\norml{w}^{2}}
	\sqrt{\normh{\eta}^{2} - m\norml{\eta}^{2}}
\end{align*}
since
\begin{multline*}
	\norml{(-\Delta)^{1/4} \mathcal{F}^{-1} \left[\frac{\abs{p}
	\abs{\hat{w}(p)}}{\lambda(p)} \right]}^{2} = \int
	\frac{\abs{p}^{3} \abs{\hat{w}(p)}^{2}}{\lambda(p)^{2}} \, dp \\
	\leq 2 \int (\sqrt{\abs{p}^{2} + m^{2}} - m) \abs{\hat{w}(p)}^{2}
	\, dp = 2(\normh{w}^{2} - m\norml{w}^{2})
\end{multline*}
and
\begin{align*}
	\sqrt{\frac{2}{\pi}} \int \frac{dp}{\abs{p}^{2}} &\left(
	\frac{1}{(2\pi)^{3/2}} \int \frac{\abs{p-q}
	\abs{\hat{w}(p-q)}}{\lambda(p-q)} \frac{m
	\abs{\hat{w}(q)}}{\lambda(q)} \, dq \right) \\
	&\leq \acc \norml{(-\Delta)^{1/4} \mathcal{F}^{-1}
	\left[\frac{\abs{p} \abs{\hat{w}(p)}}{\lambda(p)} \right]}
	\norml{(-\Delta)^{1/4} \mathcal{F}^{-1} \left[\frac{m
	\abs{\hat{\eta}(p)}}{\lambda(p)} \right]} \\
	&\leq \sqrt{m} \acc \norml{\eta} \sqrt{\normh{w}^{2} -
	m\norml{w}^{2}}
\end{align*}
since
\begin{equation*}
	\norml{(-\Delta)^{1/4} \mathcal{F}^{-1} \left[\frac{m
	\abs{\hat{\eta}(p)}}{\lambda(p)} \right]}^{2} = \int
	\frac{m^{2}\abs{p} \abs{\hat{\eta}(p)}^{2}}{\lambda(p)^{2}} \, dp
	\leq \frac{m}{2} \norml{\eta}^{2}.
\end{equation*}
We also have that
\begin{align*}
	\sqrt{\frac{2}{\pi}} \int \frac{dp}{\abs{p}^{2}} &\left(
	\frac{1}{(2\pi)^{3/2}} \int \abs{p} \frac{
	\abs{\hat{w}(p-q)}}{(\lambda(p-q) + m)^{1/2}} \frac{
	\abs{\hat{\eta}(q)}}{(\lambda(q) + m)^{1/2}} \, dq \right) \\
	&= \sqrt{\frac{2}{\pi}} \int \frac{1}{\abs{p}} \mathcal{F} \left[
	\mathcal{F}^{-1} \left[\frac{ \abs{\hat{w}(p)}}{(\lambda(p) +
	m)^{1/2}} \right] \mathcal{F}^{-1} \left[ \frac{\abs{p}
	\abs{\hat{\eta}(p)}}{(\lambda(p) + m)^{1/2}} \right] \right] \, dp
	\\
	&= \frac{2}{\pi} \int \frac{1}{\abs{x}^{2}} \mathcal{F}^{-1}
	\left[\frac{ \abs{\hat{w}(p)}}{(\lambda(p) + m)^{1/2}} \right]
	\mathcal{F}^{-1} \left[ \frac{\abs{\hat{\eta}(p)}}{(\lambda(p) +
	m)^{1/2}} \right] \, dx \\
	&\leq \frac{2}{\pi} \norml{\frac{1}{\abs{x}} \mathcal{F}^{-1}
	\left[\frac{\abs{\hat{w}(p)}}{(\lambda(p) + m)^{1/2}} \right]}
	\norml{\frac{1}{\abs{x}^{1/2}} \mathcal{F}^{-1}
	\left[\frac{\abs{\hat{\eta}(p)}}{(\lambda(p) + m)^{1/2}} \right]}
	\\
	&\leq \frac{8}{\pi} \norml{\frac{
	\abs{p}\abs{\hat{w}(p)}}{(\lambda(p) + m)^{1/2}}} \norml{\frac{
	\abs{p} \abs{ \hat{\eta}(p)}}{(\lambda(p) + m)^{1/2}} } \\
	&\leq 2 \acc \sqrt{\normh{w}^{2} - m\norml{w}^{2}}
	\sqrt{\normh{\eta}^{2} - m\norml{\eta}^{2}}
\end{align*}
and
\begin{align*}
	\sqrt{\frac{2}{\pi}} \int \frac{dp}{\abs{p}^{2}} &\left(
	\frac{1}{(2\pi)^{3/2}} \int m\abs{p} \frac{
	\abs{\hat{w}(p-q)}}{\lambda(p-q)} \frac{
	\abs{\hat{\eta}(q)}}{\lambda(q)} \, dq \right) \\
	&\leq \frac{8}{\pi} \norml{\frac{ \sqrt{m}
	\abs{p}\abs{\hat{w}(p)}}{\lambda(p)}} \norml{\frac{ \sqrt{m}
	\abs{p} \abs{ \hat{\eta}(p)}}{\lambda(p)} } \\
	&\leq 5 \acc \sqrt{\normh{w}^{2} - m\norml{w}^{2}}
	\sqrt{\normh{\eta}^{2} - m\norml{\eta}^{2}}
\end{align*}
(we have used the fact that $\frac{mp^{2}}{p^{2} + m^{2}} \leq 
\frac{5}{2} (\sqrt{p^{2} +m^{2}} - m)$.)

Finally we have
\begin{align*}
	\sqrt{\frac{2}{\pi}} \int \frac{dp}{\abs{p}^{2}} &\left(
	\frac{1}{(2\pi)^{3/2}} \int \frac{\abs{p-q}
	\abs{\hat{w}(p-q)}\abs{\hat{w}(q)}}{\lambda(p-q)} \, dq \right) \\
	&\leq \acc \norml{(-\Delta)^{1/4} \mathcal{F}^{-1}
	\left[\frac{\abs{p} \abs{\hat{w}(p)}}{\lambda(p)} \right]}
	\norml{(-\Delta)^{1/4} \mathcal{F}^{-1} \left[ \abs{\hat{\eta}(p)}
	\right]} \\
	&\leq \sqrt{2} \acc \normh{\eta} \sqrt{\normh{w}^{2} -
	m\norml{w}^{2}}.
\end{align*}

We now collect the terms:
\begin{align*}
	4\aeta \sqrt{\frac{2}{\pi}} &\int \frac{dp}{\abs{p}^{2}}
	\left( \frac{1}{(2\pi)^{3/2}} \int \abs{
	\scalar{\hat{w}(p-q)}{\beta\hat{\eta}(q)}} \, dq \right) \\
	&\leq 23\acc \aeta \sqrt{\normh{w}^{2} - m\norml{w}^{2}}
	\sqrt{\normh{\eta}^{2} - m\norml{\eta}^{2}} \\
	&\qquad + 2 \sqrt{m} \acc \aeta \norml{\eta}
	\sqrt{\normh{w}^{2} - m\norml{w}^{2}} \\
	&\qquad + 2\sqrt{2}\acc\aeta \normh{\eta}
	\sqrt{\normh{w}^{2} - m\norml{w}^{2}} \\
	&\leq 14 \aeta^{2} \acc(\normh{w}^{2} - m\norml{w}^{2}) + 15
	\acc\normh{\eta}^{2}
\end{align*}
to deduce that
\begin{align*}
	\int \frac{\scalar{\psi}{\beta\psi}(x)
	\scalar{\psi}{\beta\psi}(y)}{\abs{x-y}} &- \int
	\frac{\scalar{w}{\beta w}(x) \scalar{w}{\beta w}(y)}{\abs{x-y}} \\
	&\geq - 2 \acc \norml{\eta}^{2} \normh{w}^{2} -
	3\aeta^{2}\acc \normh{\eta}^{2} \\
	&\quad - 4\aeta \sqrt{\frac{2}{\pi}} \int \frac{dp}{\abs{p}^{2}}
	\left( \frac{1}{(2\pi)^{3/2}} \int \abs{
	\scalar{\hat{w}(p-q)}{\beta\hat{\eta}(q)}} \, dq \right) \\
	&\geq -14 \aeta^{2} \acc(\normh{w}^{2} - m\norml{w}^{2}) - 2
	\acc \norml{\eta}^{2} \normh{w}^{2} - 18
	\acc\normh{\eta}^{2}
\end{align*}
\end{proof}


\providecommand{\href}[2]{#2}
\renewcommand{\MR}[1]{}

\end{document}